\newcounter{constant}
\newcommand{\newconstant}[1]{\refstepcounter{constant}\label{#1}}
\newcommand{\useconstant}[1]{c_{\textnormal{\tiny \ref{#1}}}}
\newcounter{bigconstant}
\newtheorem{teo}{Theorem}[section]
\newtheorem{prop}[teo]{Proposition}
\newtheorem{lemma}[teo]{Lemma}
\newtheorem{cor}[teo]{Corollary}
\newtheorem{conjecture}[teo]{Conjecture}
\numberwithin{equation}{section} 
\theoremstyle{definition}
\newtheorem{remark}[teo]{Remark}
\newcommand{\PP}{\mathbb{P}}
\newcommand{\EE}{\mathbb{E}}
\newcommand{\RR}{\mathbb{R}}
\newcommand{\NN}{\mathbb{N}}
\newcommand{\ZZ}{\mathbb{Z}}
\newcommand{\charf}[1]{\mathbf{1}_{#1}}
\DeclareMathOperator{\dist}{d}
\DeclareMathOperator{\poisson}{Poisson}
\DeclareMathOperator{\ber}{Bernoulli}
\DeclareMathOperator{\expo}{Exponential}
\DeclareMathOperator{\unif}{U}
\DeclareMathOperator{\cov}{Cov}
\DeclareMathOperator{\cir}{Circ}
\newcommand{\drawing}{\vcenter{\hbox{\;\begin{tikzpicture}
\draw[thick] (-2,0) rectangle (2,2);
\draw[thick] (0,0) -- (0,2);
\draw[thick] (1,0) -- (1,2);
\draw[thick] (-1,0) -- (-1,2);
\draw[thick] (-0.15,0.85) rectangle (0.15,1.15);

\draw [decorate,decoration={brace,amplitude=5,raise=0.4},yshift=0pt, thick, rotate=90] (-0.05, -2) -- (-0.05, 2);
\node[below] at (0,-0.2){$2n$};
\draw [decorate,decoration={brace,amplitude=5,raise=0.4},yshift=0pt, thick, rotate=270] (-2.05, -1) -- (-2.05, 1);
\node[above] at (0,2.2){$n$};
\draw [decorate,decoration={brace,amplitude=5,raise=0.4},yshift=0pt, thick] (-2.05, 0) -- (-2.05, 2);
\node[left] at (-2.2,1){$n$};

\draw[thick] (0, 1) .. controls (0.9, 2) and (0.5, -0.3) .. (2, 0.1);
\draw[thick] (0, 0.9) .. controls (-2.5, 0.3) and (0.5, 2) .. (-2, 1.5);

\draw[thick, shift={(-1,1)}] plot[smooth,tension=.2]
  coordinates{(0.4,0) (0.3,0.6)(1, 0.5)(1.5,0.8)(1.7,0.5)(1.4,0)(1.7,-0.5)(1.5,-0.7)(1.2,-0.5)(1,-0.5)(0.7,-0.8)(0.4,-0.4)(0.6,-0.2)(0.4,0)};

\end{tikzpicture}
}}
}
\begin{document}

\title{Percolation in majority dynamics}

\author{Gideon Amir\footnote{Email: \ gidi.amir@gmail.com; \ Bar-Ilan University, 5290002, Ramat Gan, Israel} \and Rangel Baldasso\footnote{Email: \ baldasso@impa.br; \ Bar-Ilan University, 5290002, Ramat Gan, Israel}}

\maketitle

\begin{abstract}
We consider two-dimensional dependent dynamical site percolation where sites perform majority dynamics. We introduce the critical percolation function at time $t$ as the infimum density with which one needs to begin in order to obtain an infinite open component at time $t$. We prove that, for any fixed time $t$, there is no percolation at criticality and that the critical percolation function is continuous. We also prove that, for any positive time, the percolation threshold is strictly smaller than the critical probability for independent site percolation.
\end{abstract}

\section{Introduction}\label{sec:intro}
~
\par Since its introduction in Broadbent and Hammersley~\cite{bh}, percolation theory has become one of the most prominent areas of probability: It was considered a model for the spread of a fluid through a random medium but nowadays its applications permeate many different fields. Not only that, but many interesting theoretical discoveries where made regarding these models. In this paper we work on a modification of dynamical site percolation in $\ZZ^{2}$, where open and closed vertices perform majority dynamics.

\par In two-dimensional majority dynamics, each site $x \in \ZZ^{2}$ initially receives an opinion that can be either 0 or 1. After an exponential random time, the vertex pools its neighbors' opinions together with its own and chooses the most common one. Denote by $\eta_{t}$ the configuration at time $t \geq 0$.

\par We choose the initial opinions in an i.i.d. manner with marginal distribution $\ber(p)$, where $p \in [0,1]$ is fixed, and denote by $\PP_{p}$ the distribution of the process in this case.

\par Contrary to the usual dynamical percolation model, where sites perform independent spin-flip dynamics, in our case the evolution is not stationary nor independent. In particular, at any fixed time $t>0$, the configuration presents non-trivial dependencies. Our goal is to understand the percolative properties of the system run up to time $t$. We say that a configuration $\eta_{t}$ percolates if it contains an infinite connected component of constant opinion 1 and introduce the critical probability for time $t$ as
\begin{equation}\label{eq:critical_percolation}
p_{c}(t)=\inf\{p \in [0,1]: \PP_{p}[\eta_{t} \text{ percolates}] >0\}.
\end{equation}

\par Our main result states that $\eta_{t}$ does not percolate at criticality.
\begin{teo}\label{t:critical_probability}
For any $t \geq 0$, 
\begin{equation}
\PP_{p_{c}(t)}[\eta_{t} \text{ percolates}]=0.
\end{equation}
\end{teo}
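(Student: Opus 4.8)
The plan is to transplant the classical proof that two-dimensional percolation does not occur at criticality (Harris, Kesten, Zhang) to the time-$t$ law $\nu_t^p:=\PP_p\circ\eta_t^{-1}$, the main work being to verify that $\nu_t^p$ retains the structural features on which that proof relies. First, $\nu_t^p$ is invariant under the translations of $\ZZ^2$ and under the symmetries of the square lattice, since $\ZZ^2$, the graphical representation of the dynamics, and the i.i.d.\ initial law all are. Second, $\nu_t^p$ is ergodic under translations: $\eta_t$ is a translation-equivariant measurable function of the pair (initial spins, family of rate-one clocks), which is mixing under translations, so $\nu_t^p$ is a factor of a mixing system and hence ergodic. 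Third, $\nu_t^p$ is positively associated, which is where the specifics of majority dynamics enter: the dynamics on $\ZZ^2$ is attractive, because when a site updates it copies the \emph{strict} majority of its closed neighbourhood of five vertices (no tie arises), a monotone functional of the configuration, so by Harris' theorem the positive association of the product law $\PP_p$ is propagated to $\nu_t^p$ for every $t\geq 0$. Attractiveness also yields a coupling monotone in $p$, so $p\mapsto\PP_p[\eta_t\text{ percolates}]$ is non-decreasing; with ergodicity this already shows $p_c(t)$ is a genuine threshold, $\PP_p[\eta_t\text{ percolates}]\in\{0,1\}$ for all $p\neq p_c(t)$.

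Suppose now, for contradiction, that $\PP_{p_c(t)}[\eta_t\text{ percolates}]>0$, hence $=1$ by ergodicity. By a Burton--Keane-type uniqueness argument --- using translation invariance, positive association, and a finite-energy or mild mixing property (available here since, for fixed $t$, the correlations of $\eta_t$ decay super-exponentially in the distance, finitely many clock rings being needed to transmit influence) --- there is $\nu_t^{p_c(t)}$-a.s.\ a unique infinite open cluster. Fix $S_n=[-n,n]^2$ and let $\mathcal A_n^{\mathrm L},\mathcal A_n^{\mathrm R},\mathcal A_n^{\mathrm T},\mathcal A_n^{\mathrm B}$ be the increasing events that the left, right, top, bottom side of $S_n$ is connected to infinity by an open path contained in $\ZZ^2\setminus S_n$; they have a common probability by symmetry. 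The unique infinite cluster meets $S_n$ with probability tending to $1$ and then escapes through one of the four sides, so $\PP_{p_c(t)}[\mathcal A_n^{\mathrm L}\cup\mathcal A_n^{\mathrm R}\cup\mathcal A_n^{\mathrm T}\cup\mathcal A_n^{\mathrm B}]\to1$, and the square-root trick (FKG applied to the complementary decreasing events) gives $\PP_{p_c(t)}[\mathcal A_n^{\mathrm L}]\to1$, and similarly for the others. The mirror statement for the $0$'s with $*$-connectivity --- each side of $S_n$ joined to infinity by a $*$-connected closed path outside $S_n$ with probability tending to $1$ --- is where planar duality enters; it rests on $p_c(t)$ being \emph{also} the critical parameter beyond which no infinite $*$-connected closed cluster exists, which follows from monotonicity together with the impossibility of coexistence of an infinite open cluster and an infinite $*$-connected closed cluster (itself a planar argument via uniqueness and the square-root trick). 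Granting this, intersect $\mathcal A_n^{\mathrm L}\cap\mathcal A_n^{\mathrm R}$ (increasing) with the $*$-closed analogues $\mathcal B_n^{\mathrm B}\cap\mathcal B_n^{\mathrm T}$ (decreasing): by FKG each has probability tending to $1$, so for large $n$ their intersection has positive probability, and on it one extracts an open path and a $*$-connected closed path that by planarity are forced to cross --- a contradiction. Hence $\PP_{p_c(t)}[\eta_t\text{ percolates}]=0$.

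The step I expect to be the main obstacle is the two-dimensional duality input just used. In the independent case it is the self-matching relation $p_c+p_c^{*}=1$ together with the Russo--Seymour--Welsh estimates that pin the $0$'s to criticality at $p_c$; reproducing the needed crossing bounds for the dependent field $\eta_t$ at $p_c(t)$ --- equivalently, excluding an interval of parameters on which neither the $1$'s nor the $*$-connected $0$'s percolate --- is the delicate part. A secondary technical point is establishing a finite-energy (or decoupling) property of $\nu_t^p$ strong enough for uniqueness of the infinite cluster, which is less routine for majority dynamics than for spin-flip dynamics, since an updating site is constrained to copy a majority rather than free to take an arbitrary value. For $t=0$ the statement reduces to the Harris--Kesten--Zhang theorem for independent site percolation on $\ZZ^2$; the point of the theorem is to carry this through the dependence created by the dynamics.
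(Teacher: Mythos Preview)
Your Zhang-type strategy is a natural first attempt, but it has a genuine gap exactly where you flag it. Non-coexistence of an infinite open cluster and an infinite $*$-connected closed cluster, together with monotonicity, only yields $p_c^{*}(t)\le p_c(t)$ (where $p_c^{*}(t)$ is the threshold above which the $0$'s stop $*$-percolating). It does \emph{not} rule out a strict inequality $p_c^{*}(t)<p_c(t)$, and on any such gap neither the $1$'s nor the $*$-connected $0$'s percolate, so the mirror square-root-trick step for the $0$'s is unavailable and the contradiction collapses. In the independent case this gap is closed by external input (sharpness of the phase transition, or an RSW/finite-size argument); it is not a consequence of the Zhang geometry alone, and you have not supplied a substitute for $\nu_t^p$. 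A second, smaller issue: Burton--Keane requires finite energy, and $\nu_t^p$ need not have it (if all neighbours of $x$ stay at $1$ throughout $[0,t]$ and $x$'s clock rings, $\eta_t(x)=1$ deterministically). Uniqueness here comes instead from Gandolfi's FKG-based theorem for symmetric, ergodic, positively associated planar fields.

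The paper avoids the duality problem entirely by proving that the percolating set $\mathscr P=\{(p,t):\PP_p[\eta_t\text{ percolates}]>0\}$ is open in $[0,1]\times\RR_+$. The argument is: (i) if $(p,t)\in\mathscr P$ then, by Gandolfi uniqueness plus an RSW theory adapted from Tassion, $\PP_{p,t}[H(4n,n)]\to 1$; (ii) a multiscale renormalisation shows that a single sufficiently large crossing probability at one scale forces percolation (a finite-size criterion, with the decoupling coming from the super-exponential correlation decay of $\nu_t^p$ you mention); (iii) since $H(4n,n)$ depends on finitely many sites, its probability is continuous in $(p,t)$, so the finite-size criterion is met for $(p-\delta,t-\delta)$ as well. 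Percolation at $(p_c(t),t)$ would then give percolation at $(p_c(t)-\delta,t-\delta)$, hence at $(p_c(t)-\delta,t)$ since infinite open paths persist under the dynamics on $\ZZ^2$, contradicting the definition of $p_c(t)$. No statement about $*$-percolation of the $0$'s is ever needed.
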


\par The choice of $\ZZ^{2}$ as our underlying graph plays an important role. In this case, cycles of constant opinion are stable structures for the dynamics. Not only that, but infinite paths of constant opinion can only be destroyed from one side and this happens with finite speed. This implies that if percolation occurs for some $t$ and $p$, then it happens for all $s \geq t$. In particular, we have
\begin{equation}
p_{c}(t) \leq p_{c}(s), \text{ for all } t \geq s.
\end{equation}

\par Heuristically, the model presents some clustering phenomena that should help the construction of big connected components. This is an evidence that the critical probabilities should strictly decrease in time. However, this is not completely clear, since the clustering works in both directions: Components of opinion $0$ can also increase in size and prevent the creation of infinite clusters of 1s. We provide a partial result.
\begin{teo}\label{t:strictly_decreasing_pc}
For any $t >0$,
\begin{equation}
\frac{1}{2} \leq p_{c}(t) < p_{c}(0) = p_{c}^{site},
\end{equation}
where $p_{c}^{site}$ denotes the critical probability for two-dimensional independent site percolation.
\end{teo}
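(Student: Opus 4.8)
The theorem consists of three statements. The equality $p_c(0)=p_c^{site}$ is immediate, since $\eta_0$ has law $\ber(p)^{\otimes\ZZ^2}$, and $p_c(t)\le p_c(0)$ is the monotonicity in time already noted in the introduction. So the content is the bound $p_c(t)\ge 1/2$ and the \emph{strict} inequality $p_c(t)<p_c^{site}$. The tool underlying both is that majority dynamics is attractive: the majority of five $\{0,1\}$-values is a nondecreasing function of its arguments, so two initial configurations with $\eta_0\le\eta_0'$, run with the same Poisson clocks, satisfy $\eta_s\le\eta_s'$ for all $s$. Three consequences I would record first: (i) $p\mapsto\PP_p[\eta_t\text{ percolates}]$ is nondecreasing; (ii) by Harris' theorem the law of $\eta_t$, being the image of a product measure under an attractive evolution, is positively associated; and (iii) $\eta_t$ is translation invariant and ergodic, since it is a translation-covariant factor of the ergodic input formed by the i.i.d.\ initial opinions and the i.i.d.\ clocks; hence the translation-invariant event $\{\eta_t\text{ percolates}\}$ has probability $0$ or $1$.

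For the lower bound, the monotone coupling $\eta^{(p)}_t\le\eta^{(1/2)}_t$ for $p\le 1/2$ reduces matters to showing $\PP_{1/2}[\eta_t\text{ percolates}]=0$. The involution $\eta\mapsto\mathbf 1-\eta$ commutes with majority dynamics and fixes $\ber(1/2)^{\otimes\ZZ^2}$, so under $\PP_{1/2}$ the configurations $\eta_t$ and $\mathbf 1-\eta_t$ have the same law. If $\eta_t$ percolated in $1$'s with positive probability, then by the zero--one law it would do so almost surely; by the uniqueness theorem for positively associated, translation-invariant, ergodic planar site percolation (Gandolfi--Keane--Russo) the infinite $1$-cluster would be unique, and by the colour symmetry there would simultaneously be a unique infinite $0$-cluster. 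One then reaches a contradiction by Zhang's square-root-trick argument: uniqueness together with the FKG inequality force, with probability close to $1$, two disjoint infinite $1$-arms and two disjoint infinite $0$-arms to emanate from the sides of a large box, a configuration that cannot occur in $\ZZ^2$. Hence $\PP_{1/2}[\eta_t\text{ percolates}]=0$ and $p_c(t)\ge 1/2$.

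For the strict upper bound I would use Theorem~\ref{t:critical_probability}: since $\PP_{p_c(t)}[\eta_t\text{ percolates}]=0$ and $p_c(t)\le p_c^{site}$, monotonicity (i) reduces everything to proving
\[
\PP_{p_c^{site}}[\eta_t\text{ percolates}]>0\qquad\text{for every }t>0.
\]
Assume this fails for some $t_0>0$. Then $\PP_{p_c^{site}}[\eta_s\text{ percolates}]=0$ for all $s\in[0,t_0]$, hence $p_c(s)=p_c^{site}$ there, and for every $p<p_c^{site}$ and $s\le t_0$ the density-$p$ model at time $s$ is strictly subcritical; I would invoke sharpness of the phase transition for these fields to conclude that its finite-volume connection probabilities decay. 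The idea is to contradict this decay by showing that even an infinitesimal amount of dynamics manufactures too many long connections. The mechanism is an enhancement: turning a closed site with at least three open neighbours into an open one is an \emph{essential} enhancement in the sense of Aizenman--Grimmett (one such flip can complete a crossing that was absent), and it is the leading-order effect of majority dynamics on a closed site. Following the Aizenman--Grimmett scheme, I would write a Margulis--Russo-type identity for the $t$-derivative of a finite-volume connection probability and observe that the competing destructive updates ($1\to 0$) require the updated site to have at most one open neighbour, so such a site is pivotal for a connection only near the boundary of the volume and contributes negligibly; the surviving, creative updates then yield a differential inequality incompatible with subcritical decay. This contradiction gives $\PP_{p_c^{site}}[\eta_{t_0}\text{ percolates}]>0$, and since $t_0>0$ was arbitrary, $p_c(t)<p_c^{site}$ for all $t>0$.

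The main obstacle is this last step, and two points make it genuinely delicate. First, unlike the i.i.d.\ enhancement of Aizenman--Grimmett, majority dynamics is an \emph{interacting} process that both creates and destroys $1$'s; one must control multiple updates and the dependence they introduce, the saving grace being exactly that the destructive updates are, away from the boundary, never pivotal for connections. Second, $\eta_t$ is not finite-range, so the renormalization turning local box events into genuine percolation must be carried out with only the exponentially small long-range dependence of $\eta_t$ — a consequence of the finite speed of propagation of information, with exponential corrections — rather than with true independence. The auxiliary inputs, namely the FKG property of $\eta_t$ via Harris, ergodicity, and sharpness in the subcritical regime for these dependent fields, I expect to be comparatively routine.
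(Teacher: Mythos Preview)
Your treatment of the lower bound $p_c(t)\ge\tfrac12$ is essentially the paper's: colour symmetry of $\PP_{1/2,t}$ together with positive association and ergodicity, and then Gandolfi--Keane--Russo uniqueness to rule out coexisting infinite $1$-- and $0$--clusters. That part is fine.

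For the strict inequality $p_c(t)<p_c^{site}$ your route diverges from the paper's, and the divergence is where the gap lies. The paper does \emph{not} analyse the dynamics via a $t$--derivative or invoke any subcritical sharpness for the dependent fields $\PP_{p,t}$. Instead it builds a \emph{static} essential enhancement of i.i.d.\ site percolation whose activation variables are manufactured from the Poisson clocks: a site $y$ on the even sublattice is declared ``activated'' when $T_y<t$ and certain pieces of its neighbours' first clock times exceed $t$. Aizenman--Grimmett then applies verbatim to this enhancement of $\eta_0$ (an i.i.d.\ field), yielding $p_c^{enh}<p_c^{site}$. The second, purely deterministic, step is that any infinite open path in the enhanced configuration survives in $\eta_t$: along such a path the non-enhanced stretches are pinned because their endpoints do not ring before~$t$, and each enhanced site has all four neighbours frozen in $\eta_0$ with at least three of them open, so its own pre-$t$ ring flips it to $1$. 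This gives $p_c(t)\le p_c^{enh}<p_c^{site}$ directly, with no appeal to Theorem~\ref{t:critical_probability} and no analysis of $\PP_{p,t}$ beyond the clock coupling.

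Your sketch, by contrast, hinges on two inputs that are not available. First, you call ``comparatively routine'' the sharpness of the phase transition for the dependent measures $\PP_{p,s}$; this is not established anywhere in the paper and is, for general positively associated finite-range-plus-tail fields, a substantial open-ended problem rather than a routine lemma. Second, your Margulis--Russo identity is only valid as written at $t=0$; for $t>0$ the generator acts on a non-product law, the competing $1\!\to\!0$ and $0\!\to\!1$ updates interact, and the claim that destructive updates are pivotal only at the boundary concerns $\eta_{s-}$, not the time-$t$ configuration whose crossing you are tracking. You acknowledge these as ``delicate,'' but they are precisely the content of the argument, not side issues. The paper sidesteps all of this by keeping the enhancement comparison entirely within the i.i.d.\ world and passing to $\eta_t$ by a one-line pathwise domination; I would rework your upper-bound argument along those lines.
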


\par This says that majority dynamics affects the percolative phase in a non-trivial manner. Our last result says that this perturbation is continuous in time.
\begin{teo}\label{t:continuity}
The function $t \mapsto p_{c}(t)$ is continuous.
\end{teo}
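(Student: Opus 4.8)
The plan is to use the monotonicity $p_{c}(s)\ge p_{c}(t)$ for $s\le t$ recorded above. Since $t\mapsto p_{c}(t)$ is non-increasing it has one-sided limits everywhere and $p_{c}(t^{+})\le p_{c}(t)\le p_{c}(t^{-})$, so it suffices to prove the reverse inequalities, i.e.\ that $p_{c}$ is left- and right-continuous at every fixed $t$ (the point $t=0$ only requiring right-continuity). I would also note — though it is not logically needed below — that $\{\eta_{t}\text{ percolates}\}$ is translation invariant for the translation-ergodic field $\eta_{t}$, which is a measurable equivariant factor of the i.i.d.\ initial opinions and the i.i.d.\ Poisson clocks; hence $\PP_{p}[\eta_{t}\text{ percolates}]\in\{0,1\}$, and by Theorem~\ref{t:critical_probability} it equals $1$ precisely when $p>p_{c}(t)$.

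The key device is a comparison between the configurations at two nearby times. Fix $\delta>0$ and consider a time window of length $\delta$; let $S$ be the (random) set of sites whose Poisson clock rings at least once in that window. Then $S$ is an i.i.d.\ $\ber(1-e^{-\delta})$ field, independent of the configuration at the beginning of the window, and during the window the dynamics can only change opinions at sites of $S$. Taking the window $[t-\delta,t]$ gives
\[
\eta_{t}\ \le\ \eta_{t-\delta}\vee\charf{S},
\]
and taking the window $[t,t+\delta]$ gives $\eta_{t+\delta}\le\eta_{t}\vee\charf{S}$, with $S$ now independent of $\eta_{t}$. In each case the dominating configuration is the configuration produced by the dynamics together with an \emph{independent} sparse Bernoulli field of extra $1$s; since having an infinite $1$-cluster is an increasing event, percolation of $\eta_{t}$ forces percolation of $\eta_{t-\delta}\vee\charf{S}$, and percolation of $\eta_{t+\delta}$ forces percolation of $\eta_{t}\vee\charf{S}$.

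It remains to absorb such a sparse field into an arbitrarily small increase of the initial density. The estimate I would aim to prove is: for every $p\in(0,1)$ and every $s\ge 0$, if $S$ is an i.i.d.\ $\ber(1-e^{-\delta})$ field independent of the dynamics, then the law of $\eta^{p}_{s}\vee\charf{S}$ is stochastically dominated by that of $\eta^{p''}_{s}$, where $p''=p''(p,s,\delta)\downarrow p$ as $\delta\downarrow 0$, uniformly for $s$ in compact intervals. Granting this, left-continuity follows: if $p>p_{c}(t)$ then $\eta^{p}_{t}$ percolates with positive probability, hence so does $\eta^{p}_{t-\delta}\vee\charf{S}$, hence so does $\eta^{p''(p,t-\delta,\delta)}_{t-\delta}$, so that $p_{c}(t-\delta)\le p''(p,t-\delta,\delta)$; letting $\delta\downarrow 0$ yields $p_{c}(t^{-})\le p$, and then $p\downarrow p_{c}(t)$ yields $p_{c}(t^{-})\le p_{c}(t)$. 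Right-continuity is symmetric: if $p<p_{c}(t)$ and there were times $t'>t$ arbitrarily close to $t$ with $p_{c}(t')<p$, then $\eta^{p}_{t'}$ percolates with positive probability, hence so does $\eta^{p}_{t}\vee\charf{S}$ with $\delta=t'-t$, hence $p_{c}(t)\le p''(p,t,t'-t)$; but the right-hand side tends to $p<p_{c}(t)$ as $t'\downarrow t$, a contradiction, so $p_{c}(t^{+})\ge p$ and then $p_{c}(t^{+})\ge p_{c}(t)$.

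The heart of the matter, and the step I expect to be genuinely delicate, is the absorption estimate. Its difficulty is that the extra $1$s appear at time $s$ rather than at time $0$, so one cannot simply trade them against extra initial density using the monotonicity of majority dynamics: $1$s planted at time $0$ may well have been erased by time $s$. To carry out the comparison one must quantify how a density of extra initial $1$s propagates to time $s$ — using that the value of $\eta^{p}_{s}$ in any finite region is determined by the initial opinions and clocks in a random, a.s.\ finite, exponentially-tailed \emph{cone of influence} — and then compare the resulting perturbation with an independent sprinkling, via Strassen's theorem. I expect this to reuse the finite-range-of-dependence bounds and the two-dimensional crossing/duality estimates that underlie the proofs of Theorems~\ref{t:critical_probability} and~\ref{t:strictly_decreasing_pc}.
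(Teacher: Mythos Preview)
Your proof strategy hinges entirely on the ``absorption estimate'' --- that the law of $\eta^{p}_{s}\vee\charf{S}$ is stochastically dominated by that of $\eta^{p''}_{s}$ with $p''\downarrow p$ --- and you correctly identify this as the delicate step, but you do not prove it. This is a genuine gap, and in fact the estimate as stated is doubtful: the field $\eta^{p}_{s}\vee\charf{S}$ contains sprinkled $1$s uncorrelated with the surrounding cluster structure (e.g.\ isolated $1$s in seas of $0$s), whereas for large $s$ the configuration $\eta^{p''}_{s}$ has had time to smooth such sites out. It is far from clear that any amount of extra initial density, once processed by the dynamics for time $s$, can dominate an arbitrary independent sprinkling at time $s$; the cone-of-influence bounds and crossing estimates you invoke do not obviously yield a Strassen-type coupling of this kind.

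The paper avoids this problem entirely. For left-continuity it simply quotes the openness of $\mathscr{P}=\{(p,t):\PP_{p}[\eta_{t}\text{ percolates}]>0\}$ (Proposition~\ref{prop:percolation_parameters}), already established en route to Theorem~\ref{t:critical_probability}: if $p\in(p_{c}(t),p_{c}(t-))$ then $(p,t)\in\mathscr{P}$, hence $(p,t-\delta)\in\mathscr{P}$ for small $\delta$, a contradiction. For right-continuity, the key idea --- which replaces your absorption estimate --- is to perform the sprinkling \emph{at time zero} rather than at time $s$. Concretely, couple $\xi\sim\PP_{p+\delta,t+\delta}$ and $\bar\xi\sim\PP_{p+\delta+\delta',t}$ with $\delta'=(1-p-\delta)(1-e^{-\delta})$ by using the same initial condition and clocks, but in the interval $[0,\delta]$ replace the majority rule in $\bar\xi$ by ``set to $1$ whenever the clock rings.'' Then $\bar\xi_{\delta}$ is i.i.d.\ $\ber(p+\delta+\delta')$, monotonicity of majority dynamics gives $\xi_{t+\delta}\le\bar\xi_{t+\delta}$, and one reads off $p_{c}(t)\le p_{c}(t+)+\delta+\delta'$. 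The point is that at time $0$ the configuration is i.i.d., so an independent $\ber(1-e^{-\delta})$ sprinkling is exactly an increase of the parameter --- no absorption argument is needed.
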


\textbf{Proof overview.} To conclude Theorem~\ref{t:critical_probability}, we analyze the supercritical regime. We prove that the set
\begin{equation}\label{eq:set_P}
\mathscr{P}=\{(p,t) \in [0,1] \times \RR_{+}: \PP_{p}[\eta_{t} \text{ percolates}]>0\}
\end{equation}
is open in $[0,1] \times \RR_{+}$.

\par The understanding of critical phenomena in percolation theory is closely related to the study of rectangle crossings, and here this relation is also used. For $n \in \NN$ and $\lambda > 0$, define the crossing event $H(\lambda n,n)$ as the existence of an open crossing of the rectangle $[1,\lfloor \lambda n \rfloor] \times [1,n]$ connecting the left boundary $\{1\} \times [1,n]$ to the right boundary $\{\lfloor \lambda n \rfloor \} \times [1,n]$.

\par We first consider the crossing events for parameters $(p,t)$ in $\mathscr{P}$. In this case, we prove that
\begin{equation}
\PP_{p}[\eta_{t} \in H(\lambda n, n)] \to 1, \text{ as } n \to \infty,
\end{equation}
for any $\lambda>0$. This result is proved with the help of a Russo-Seymour-Welsh theory borrowed from Tassion~\cite{tassion}.

\par The second step of the proof connects the crossing events with the existence of percolation. We will verify that if $\PP_{p}[\eta_{t} \in H(3n, n)]$ is large enough for some big value of $n$ depending on $p$ and $t$, then $(p,t) \in \mathscr{P}$. We use multiscale renormalisation to prove that, provided $\PP_{p}[\eta_{t} \in H(3n, n)]$ is sufficiently big, it converges exponentially fast to one. This will allow the construction of an infinite open path with positive probability.

\par Our approach towards Theorem~\ref{t:critical_probability} is very general and should apply to many different two-dimensional models. We believe it is possible to prove the same result for any model that satisfies
\begin{itemize}
\item rotation, reflection and translation invariance;
\item the FKG inequality;
\item large-degree polynomial decay of spatial correlations (see~\eqref{eq:decoupling} for correlation decay in the case of majority dynamics);
\item continuity of the crossing probabilities as a function of $p$.
\end{itemize}

\par The proof of Theorem~\ref{t:continuity} also uses that the set $\mathscr{P}$ is open, since we may write
\begin{equation}
p_{c}(t)=\inf\{ p \in [0,1]: (p,t) \in \mathscr{P}\}.
\end{equation}
Together with the fact that $t \mapsto p_{c}(t)$ is non-increasing, it is easy to conclude that this function is left-continuous. To conclude the continuity from the right, we construct a coupling between two majority dynamics with different initial densities that allows us to compare the percolative phases in two different times.

\par As for Theorem~\ref{t:strictly_decreasing_pc}, our approach is based on the enhancement theorem, see Aizenman and Grimmett~\cite{ag} and Balister, Bollobás and Riordan~\cite{bbr}. The enhancement used here is based on Camia, Newman and Sidoravicius~\cite{cns}. The idea is to prove that it is possible to create an infinite component with subpaths that are initially open, whose endpoints do not ring up to time $t$ and are at distance two from each other. In particular, there is a vertex that connects two consecutive endpoints and, when the enhancement is performed in this vertex, the connected component increases in size.

\textbf{Related works.} The continuous time model, sometimes called the asynchronous model, is usually considered together with its discrete counterpart, where all sites are uptated at the same time. Many works consider both models. In Moran~\cite{moran} and Ginosar and Holzman~\cite{gh}, the authors study the dynamics on bounded degree graphs. They prove that, provided the graph does not grow very fast, it presents the period-two property, that says each vertex eventually has an orbit of period at most two. In Tamuz and Tessler~\cite{tt}, this result is strengthened, proving that, in the asynchronous model, almost surely each vertex eventually fixates and, besides, that it changes opinion only a bounded number of times. This last result is actually combinatorial: If no two clocks ring at the same time, then, for any initial condition, each site eventually fixates.

\par It is not the case that, for all bounded degree graphs, any initial configuration presents the period-two property under discrete time majority dynamics. One such example where this does not happen is the $d$-regular infinite tree. It is not hard to construct an initial configuration where this does not hold. Even so, Benjamini, Chan, O'Donnell, Tamuz and Tan~\cite{bcott} proves that, for unimodular transitive graphs, if the initial distribution of opinions is invariant with respect to the automorphism group of the graph, then the period-two property occurs almost surely.

\par Back to the asynchronous model, the speed with which fixation occurs may depend on the initial density and it is still not fully understood. It is believed that, with i.i.d. initial condition with density $p$, the probability of not fixating until time $t$ decays exponentially fast. A partial result in this direction is given in~\cite{cns}, where the authors obtain stretched exponential bounds for this probability when the underlying graph is the hexagonal lattice, provided the initial density is not close to $\frac{1}{2}$.

\par Perhaps the most natural modification of the model is changing the way draws are settled by choosing a new independent opinion uniformly at random. This is known as zero-temperature Glauber dynamics (ZTGD) for the Ising model and here the behavior is dramatically different. To exemplify, let $(\xi_{t})_{t \geq 0}$ denote the ZTGD in $\ZZ^{d}$ and define
\begin{equation}\label{eq:fixation_ztgd}
\tilde{p}_{c}(d)= \inf\{p \geq 0: \PP_{p}[\lim_{t \to \infty}\xi_{t}(0)=1]=1\},
\end{equation} 
the threshold for fixation at opinion 1. The expected behavior of $\tilde{p}_{c}(d)$ is stated in the following conjecture.
\begin{conjecture}
For any $d \geq 2$, $\tilde{p}_{c}(d)=\frac{1}{2}$.
\end{conjecture}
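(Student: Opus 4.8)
The plan is to carry out the monotonicity-plus-renormalisation strategy of the proof overview above, now adapted to zero-temperature Glauber dynamics (ZTGD) and fed with the two-dimensional curvature-flow picture; the decisive and genuinely open ingredient is a scale-uniform contraction estimate for minority droplets valid all the way down to $p = 1/2$.

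First I would record that ZTGD is attractive once the tie-breaking coins are coupled: the update at a site is a non-decreasing function of the number of $1$'s among its $2d$ neighbours (output $1$ when this number exceeds the threshold $d$, $0$ when it is below $d$, and the same fresh coin when it equals $d$), so increasing the initial configuration increases $\xi_t$ at every site and every time under a common realisation of the clocks and coins. Realising the i.i.d.\ $\ber(p)$ initial data through a single i.i.d.\ uniform field then makes $p \mapsto \PP_p[\lim_t \xi_t(0) = 1]$ non-decreasing, so $\tilde{p}_{c}(d)$ is a genuine threshold; and since the whole setup is invariant under simultaneously swapping $0 \leftrightarrow 1$ and $p \leftrightarrow 1 - p$, we get $\PP_{1/2}[\lim_t \xi_t(0) = 1] = \PP_{1/2}[\lim_t \xi_t(0) = 0] \leq 1/2$, so $p = 1/2$ never lies in the fixation set. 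Hence the conjecture is \emph{equivalent} to the assertion that for every $p > 1/2$, almost surely $\xi_t(0) = 1$ for all large $t$; this is the statement I would attack.

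Second, the engine should be the curvature-flow picture available in $d = 2$: the expected total length of the $0$-$1$ interface is a supermartingale, and a finite droplet of $0$'s in a sea of $1$'s loses area at a rate governed by the integrated curvature of its boundary, so convex minority droplets disappear in finite time even with no bias. I would try to upgrade this, for $p > 1/2$, into a block statement that can feed the renormalisation cascade already used in this paper: coarse-grain $\ZZ^2$ into boxes of side $L$, call a box \emph{good} if, after running the dynamics for time $O(L^2)$ inside a slightly larger box with $1$-frozen boundary condition, the box and its neighbours are entirely $1$ and remain so; then prove (i) good boxes occur with probability tending to $1$ as $L \to \infty$ for every $p > 1/2$, and (ii) the error in replacing the true dynamics by the locally $1$-frozen one is controlled by an analogue of the decoupling estimate \eqref{eq:decoupling} together with the monotonicity above, which sandwiches the truth between two locally frozen evolutions. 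For every $p > 1/2$ the initial $0$-clusters are already finite, because the $0$-density $1 - p$ lies below $p_{c}^{site}$, and when $p$ is close to $1$ a Fontes--Schonmann--Sidoravicius-type contour argument dissolves them directly; the genuinely new input required is to control their diverging typical size, and their possible enlargement by the dynamics, \emph{uniformly} as $p \downarrow 1/2$. Once the good-box probability is pushed close enough to $1$, the renormalisation already developed here propagates ``eventually everything is $1$'' to the whole lattice and to all times.

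The main obstacle is exactly this uniform control near $p = 1/2$. The supermartingale argument by itself only shows that the interface density converges to \emph{some} limit, and excluding a positive limit --- that is, ruling out frozen or slowly creeping straight interfaces and stabilised labyrinths of $0$'s --- requires a quantitative, scale-uniform version of the heuristic that a slightly biased minority droplet has strictly negative area drift, and that is precisely what is missing: as $p \downarrow 1/2$ the problem becomes essentially as hard as analysing ZTGD at $p = 1/2$ itself, where it is not even known whether a fixed site in $\ZZ^2$ flips only finitely often. A realistic intermediate goal would be to prove $\tilde{p}_{c}(d) = 1/2$ in high dimension, where block and mean-field estimates are more robust, or to push the contour argument that gives fixation at $1$ for $p$ near $1$ down to some explicit $p^* < 1$, thereby establishing at least $\tilde{p}_{c}(d) < 1$ with an honest constant.
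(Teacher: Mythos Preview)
The statement you are attempting to prove is presented in the paper as an \emph{open conjecture}, not a theorem: the paper supplies no proof, and explicitly remarks that ``very little is known when $d \geq 2$'', citing only the partial results of Fontes--Schonmann--Sidoravicius ($\tilde{p}_{c}(d)\in(0,1)$) and Morris ($\tilde{p}_{c}(d)\to\frac12$ as $d\to\infty$). There is therefore no proof in the paper to compare your proposal against.

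As for the proposal itself, it is not a proof but an honest research outline, and you correctly identify the gap yourself: the scale-uniform contraction estimate for minority droplets near $p=\frac12$ is precisely the missing ingredient, and nothing in your sketch supplies it. Two smaller points are worth flagging. First, you write that at $p=\frac12$ in $\ZZ^{2}$ ``it is not even known whether a fixed site flips only finitely often''; in fact the paper cites Nanda--Newman--Stein, who proved that almost surely \emph{no} vertex fixates at $p=\frac12$, so this is known (and goes the other way). Second, your fallback goal of ``establishing at least $\tilde{p}_{c}(d)<1$ with an honest constant'' is already the content of the Fontes--Schonmann--Sidoravicius result quoted in the paper, so it would not be new. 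The monotonicity and symmetry reductions in your first paragraph are standard and correct, but they do not touch the hard part.
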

As a consequence of Arratia~\cite{arratia}, one easily obtains $\tilde{p}_{c}(1)=1$. However, very little is known when $d \geq 2$. Fontes, Schonmann and Sidoravicius~\cite{fss} proved that $\tilde{p}_{c}(d) \in (0,1)$, for all $d \geq 2$, while Morris~\cite{morris} established that $\tilde{p}_{c}(d) \to \frac{1}{2}$ as $d$ increases.

\par When consideing fixation in this model, ergodic arguments yield that fixation implies unanimity of opinions, since the only stable structures are the whole space, halfspaces and strips. Here, the case $p=\frac{1}{2}$ in $\ZZ^{2}$ presents a distinctive behavior, as proved by Nanda, Newman and Stein~\cite{nns}: Almost surely, no vertex fixates, contrary to majority dynamics. Camia, De Santis and Newman~\cite{csn} studied finer properties of the model and proved that the flip rate at any given vertex converges to zero in probability. We remark that these results are not known for $\ZZ^{d}$, with $d \geq 3$.

\par When the model evolves on top of $d$-regular trees, one can introduce the critical threshold $\tilde{p}_{c}^{tree}(d)$ in the same way as in~\eqref{eq:fixation_ztgd}. Here, Howard~\cite{howard} established that $\tilde{p}_{c}(3) > \frac{1}{2}$, while Caputo and Martinelli~\cite{cm} proved that $\tilde{p}_{c}(d) \to \frac{1}{2}$ as $d$ grows.

\newconstant{c:fixation_decay}

\textbf{Conjectures and open problems.} The evolution in majority dynamics is very localized and has many stable structures that should appear very fast. However, the decay of the probability of the origin being fixated at time $t$ is still not known. It is conjectured that this decay is exponential: For every $p \in [0,1]$, there exists $\useconstant{c:fixation_decay}=\useconstant{c:fixation_decay}(p)>0$ such that
\begin{equation}
\PP_{p}\left[ \eta_{s}(0) \neq \lim_{u} \eta_{u}(0), \text{ for some } s \geq t\right] \leq \useconstant{c:fixation_decay}e^{-\useconstant{c:fixation_decay}^{-1}t}.
\end{equation}
As already mentioned, a partial result towards this can be found in~\cite{cns}. We remark that their proof also works for $\ZZ^{2}$, provided $p$ is not close to $\frac{1}{2}$.

\par As a consequence of~\cite{tt}, majority dynamics on $\ZZ^{2}$ fixates. This allows us to define the limiting configuration $\eta_{\infty}$ as the pointwise limit of $\eta_{t}$. This is a random element of $\{0,1\}^{\ZZ^{2}}$ and one can also ask about its percolative properties. If we define
\begin{equation}
p_{c}^{\infty}=\inf\{p \in [0,1]: \PP_{p}[\eta_{\infty} \text{ percolates}]>0\},
\end{equation}
we have
\begin{equation}
\frac{1}{2} \leq p_{c}^{\infty} \leq \lim_{t} p_{c}(t),
\end{equation}
as we shall see. However, it is not known whether equality holds in any of the two estimates. We conjecture this is the case for the latter.
\begin{conjecture} We have $p_{c}^{\infty}=\lim_{t} p_{c}(t)$.
\end{conjecture}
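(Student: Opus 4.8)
\medskip
\noindent\textbf{A possible route.}
Since $t\mapsto p_{c}(t)$ is non-increasing and bounded below, $\lim_{t}p_{c}(t)=\inf_{t\geq 0}p_{c}(t)$, and the bound $p_{c}^{\infty}\leq\lim_{t}p_{c}(t)$ is already available; the task is the reverse inequality. The map $p\mapsto\PP_{p}[\eta_{\infty}\text{ percolates}]$ is non-decreasing, by the monotone coupling of the dynamics under which ordered initial conditions stay ordered for all $t$, hence in the limit. It therefore suffices to prove that if $p>p_{c}^{\infty}$, so that $\PP_{p}[\eta_{\infty}\text{ percolates}]>0$, then $\PP_{p}[\eta_{t}\text{ percolates}]>0$ for some \emph{finite} $t$: by the definition of $p_{c}(t)$ and Theorem~\ref{t:critical_probability} this forces $p>p_{c}(t)\geq\lim_{s}p_{c}(s)$, and letting $p\downarrow p_{c}^{\infty}$ yields $p_{c}^{\infty}\geq\lim_{s}p_{c}(s)$.

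\medskip
\noindent\textbf{The steps.}
First, I would rerun the Russo--Seymour--Welsh analysis from the proof of Theorem~\ref{t:critical_probability} (following Tassion~\cite{tassion}) for the limit configuration $\eta_{\infty}$. The field $\eta_{\infty}$ is invariant under the symmetries of $\ZZ^{2}$, and, since $\eta_{t}\to\eta_{\infty}$ almost surely in the product topology, it inherits the FKG inequality from the configurations $\eta_{t}$ by passing to the limit on increasing cylinder events. Granting also the decoupling ingredient discussed below, percolation of $\eta_{\infty}$ should give, by the same argument as in the proof of Theorem~\ref{t:critical_probability},
\begin{equation}
\PP_{p}[\eta_{\infty}\in H(3n,n)]\to 1 \quad\text{as } n\to\infty.
\end{equation}
Second, each event $\{\eta\in H(3n,n)\}$ depends only on the finitely many sites of $[1,3n]\times[1,n]$, and every site fixates almost surely~\cite{tt}; hence $\PP_{p}[\eta_{t}\in H(3n,n)]\to\PP_{p}[\eta_{\infty}\in H(3n,n)]$ as $t\to\infty$ by bounded convergence, so for $n$ large and then $t=t(n)$ large $\PP_{p}[\eta_{t}\in H(3n,n)]$ can be made as close to $1$ as we wish. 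Third, I would feed this into the finite-size criterion obtained by multiscale renormalisation in the proof of Theorem~\ref{t:critical_probability}: once $\PP_{p}[\eta_{t}\in H(3n,n)]$ exceeds the relevant threshold at a scale $n\geq n_{0}(p,t)$, one has $(p,t)\in\mathscr{P}$. Combining the three steps produces the required finite $t$, and the reduction above then closes the argument.

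\medskip
\noindent\textbf{The main obstacle.}
Both the first step and the interplay of the second and third steps hinge on quantitative control of the relaxation of the dynamics. The polynomial decoupling~\eqref{eq:decoupling} used in the proof of Theorem~\ref{t:critical_probability} reflects finite speed of propagation, a feature lost for $\eta_{\infty}$, so the first step genuinely requires a polynomial decay of spatial correlations for $\eta_{\infty}$ itself. Moreover, in the renormalisation the admissible scale $n_{0}(p,t)$ grows with $t$ (the range in~\eqref{eq:decoupling} is of order $t$), whereas a union bound over the $\Theta(n^{2})$ sites of the rectangle forces the time $t(n)$ of the second step to grow at least like a power of $n$; the two are compatible only if that power is below one. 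Both issues would be resolved by a quantitative fixation estimate of the form $\PP_{p}[\eta_{s}(0)\neq\eta_{\infty}(0)\text{ for some }s\geq t]\leq Ct^{-a}$ with $a>2$ (ideally with $a$ arbitrary): such a bound lets one take $t=t(n)$ growing much more slowly than $n$ and derive the missing decoupling for $\eta_{\infty}$ from~\eqref{eq:decoupling}, and simultaneously closes the scale/time balance in the third step. Producing such polynomial fixation — say by surrounding a typical site with a small stable block of a single opinion in polynomial time — is the crux; it is weaker than the exponential fixation conjectured above, which if true would make the whole scheme go through immediately, since then $\eta_{\infty}$ and $\eta_{t}$ already agree on a box of side $n$ with probability polynomially close to $1$ as soon as $t$ is of order $\log n$.
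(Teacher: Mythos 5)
What you have written is a strategy outline, not a proof, and you say so yourself; it is worth being explicit that the statement you are addressing is stated in the paper as an open \emph{conjecture}, with no proof given. The reduction you set up is sound: monotonicity of $p\mapsto\PP_{p}[\eta_{\infty}\text{ percolates}]$ via the attractive coupling, the passage from ``percolation of $\eta_{\infty}$ at $p$'' to ``percolation of $\eta_{t}$ at $p$ for some finite $t$'' as the only missing implication, and the transfer $\PP_{p}[\eta_{t}\in H(3n,n)]\to\PP_{p}[\eta_{\infty}\in H(3n,n)]$ by fixation~\cite{tt} and bounded convergence are all correct, as is your accounting that the finite-size criterion of Lemma~\ref{lemma:percolation_condition} has $n_{0}$ growing (roughly linearly) in $t$ through the constant in~\eqref{eq:decoupling}, so that a fixation bound polynomial in $t$ with exponent $a>2$ would close the scale/time balance. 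This is essentially the route the authors themselves envisage.

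The genuine gap is exactly the one you flag, and it is not filled: to run the RSW/renormalisation machinery of Section~\ref{sec:criticality} for $\eta_{\infty}$ you need some polynomial (or better) decay of spatial correlations for the limiting configuration, or equivalently a quantitative fixation estimate, and neither is available in the regime that matters. The paper points out that the methods of~\cite{cns} give stretched exponential decay only when $p>\lim_{t}p_{c}(t)$, which is useless here since the whole point is to handle $p\in(p_{c}^{\infty},\lim_{t}p_{c}(t))$; near density $\tfrac12$ no quantitative fixation rate is known, and your suggested ``surround a typical site with a small stable block in polynomial time'' is precisely the unproved crux. So the proposal correctly identifies the obstruction and a sufficient missing ingredient, but it does not prove the conjecture; as it stands it is a conditional argument (conjecturally valid given polynomial fixation with $a>2$), matching the paper's own assessment of why the statement remains open.
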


\par A central piece missing for a proof of this conjecture is some form of spacial correlation decay for the limiting configuration. The techniques from~\cite{cns} can be adapted to prove stretched exponential decay if $p > \lim_{t} p_{c}(t)$, but this is also not known for all values of $p \in [0,1]$.

\par Also not known is if the limiting configuration has percolation at criticality. Notice, however, that this is not the case if $p_{c}^{\infty} = \frac{1}{2}$, since percolation never happens at $p=\frac{1}{2}$. This is stated as Theorem~\ref{t:strictly_decreasing_pc} for finite values of $t$, but the same proof applies to the limiting configuration.

\par Finally, regarding the critical percolation function $t \mapsto p_{c}(t)$, we conjecture that it is strictly decreasing. A possible proof strategy would be to understand pivotal sites of crossing events. To do so, one needs to consider sites whose initial opinions determine the existence of a crossing at a given time $t$. A difficulting factor is that we do not have a good control on the collection of sites whose opinions at time $t$ are influenced by a given pivotal site. We know that this set usually grows linearly with time, but it might be the case that, when a site is pivotal, this set is abnormally big and that many things change at time $t$ just by a single change at time zero.

\par One other lattice where this process might be of interest is the hexagonal lattice, where each face has an opinion that is updated according to the majority rule after an exponential random time (equivalently, majority dynamics evolving on the sites of the triangular lattice). We believe that the proof of Theorem~\ref{t:critical_probability} should still hold in this context. Here, the critical probability for percolation at time zero is $\frac{1}{2}$ and we conjecture that the critical percolation function is constant equal to $\frac{1}{2}$. By symmetry considerations, one easily concludes that this function is bounded from below by $\frac{1}{2}$. However, it is not true that infinite clusters are preserved by the dynamics and this implies that there might be times where the function increases and percolation ceases to exist. At the same time, this apparent lack of monotonicity brings other interesting questions that are easily solved for the square lattice. One might ask, for example, about the existence of exceptional times for percolation at $\frac{1}{2}$. As we observe in Section~\ref{sec:percolation}, these times do not exist for the square lattice, but the proof relies on the persistence of percolation, a fact that is not necessarily true in the hexagonal lattice.

\textbf{Structure of the paper.} In Section~\ref{sec:model}, we collect some facts about majority dynamics. These are general facts and should apply to a general family of graphs. Section~\ref{sec:percolation} presents statements regarding percolation in finite time that follows mainly from the choice of $\ZZ^{2}$ as our underlying graph. Section~\ref{sec:criticality} contains the proof of Thereom~\ref{t:critical_probability}, while Theorems~\ref{t:strictly_decreasing_pc} and~\ref{t:continuity} are proved in Section~\ref{sec:time_perc}.

\textbf{Acknowledgments.} This work was supported by the Israel Science Foundation through grant 575/16 and by the German Israeli Foundation through grant I-1363-304.6/2016.

\section{The model}\label{sec:model}
~
\par Two-dimensional majority dynamics is defined precisely as the Markov process with state space $\{0,1\}^{\ZZ^{2}}$ and generator\footnote{For $x,y \in \ZZ^{2}$, we write $x \sim y$ if $||x-y||_{2}=1$.}
\begin{equation}
Lf(\eta)=\sum_{x \in \ZZ^{2}} (\charf{\{\sum_{y \sim x}\eta(y) \geq 3, \eta(x)=0 \}}+\charf{\{\sum_{y \sim x}\eta(y) \leq 1, \eta(x)=1 \}})(f(\eta^{x})-f(\eta))
\end{equation}
where $f$ is any local function and, for $x \in \ZZ^{2}$,
\begin{equation}
\eta^{x}(z)=\left\{\begin{array}{cl}
1-\eta(x),& \mbox{if}\,\,\, z=x,\\
\eta(z),& \mbox{ otherwise.}
\end{array}
\right.
\end{equation}

\par For $p \in [0,1]$, let $\PP_{p}$ denote the distribution of the process when $(\eta_{0}(x))_{x \in \ZZ^{2}}$ has i.i.d. entries with distribution $\ber(p)$. Denote also by $\PP_{p,t}$ the distribution of $\eta_{t}$.

\par The measures $\PP_{p,t}$ are ergodic separately with
respect to horizontal and vertical translations. Besides, these measures are invariant under reflections and orthogonal rotations of $\ZZ^{2}$.

\bigskip

\par This process also has a graphical construction that will be useful to us. Let $\mathscr{P}=(\mathscr{P}_{x})_{x \in \ZZ^{2}}$ be a collection of independent Poisson processes and $(U_{x})_{x \in \ZZ^{2}}$ be a collection of i.i.d. $\unif[0,1]$ random variables. Given $p \in [0,1]$, set
\begin{equation}
\eta_{0}(x)=\charf{\{U_{x} \leq p\}},
\end{equation}
and evolve the process using the clocks $\mathscr{P}$: Whenever $\mathscr{P}_{x}$ rings at time $t$, update $\eta_{t}(x)$ according to the most common opinion among the neighbors of $x$. Ties are solved by observing the opinion of $x$.

\subsection{Correlation decay of $\PP_{p,t}$}\label{subsec:correlation}
~
\par For any fixed time $t > 0$, the opinions on any two close sites are not independent. However, this dependence decays fast as the distance between the two sites increases. In this subsection we prove these claims.

\par Fix $t \geq 0$ and $x \in \ZZ^{2}$. Let $C_{t}(x)$ denote the set of vertices whose opinion is queried in order to determine $\eta_{t}(x)$. Let $B_{n}=[-n,n]^{2}$ and denote by $x+B_{n}$ the translation of $B_{n}$ by $x \in \ZZ^{2}$.

\newconstant{c:large_cone_of_light}

\par Observe that, for any $u \geq 0$, if $m= \lceil 3e^{2}t+u\rceil$, union bounds and Stirling's formula imply
\begin{equation}\label{eq:correlation_decay}
\begin{split}
\PP_{p,t}[C_{t}(x) \cap \{x+ \partial B_{m}\} \neq \emptyset] & \leq \PP_{p,t}\left[\begin{array}{cl} \text{there exists a simple path of size } m \\ \text{starting at } x \text{ such that all clocks} \\ \text{ring before time } t \text{ in order} \end{array}\right]\\
& \leq 4 \cdot 3^{m} \PP_{p,t}[\poisson(t) \geq m] \\
& \leq 4\cdot 3^{m} e^{-t} \sum_{k \geq m} \frac{t^{k}}{k!} \\ & \leq 4 \cdot 3^{m} e^{-t} e^{t} \frac{t^{m+1}}{(m+1)!} \\ & \leq \frac{4t}{\sqrt{2 \pi}} \exp\left\{ m(1+\log 3 + \log t-\log m) \right\} \\ & \leq \useconstant{c:large_cone_of_light}e^{-u},
\end{split}
\end{equation}
for some positive constant $\useconstant{c:large_cone_of_light}=\useconstant{c:large_cone_of_light}(t)$.

\par Fix $x, y \in \ZZ^{2}$ with $\dist=\dist(x,y)>6e^{2}t+1$. From the expression above, we get
\begin{equation}\label{eq:disjoing_cone_of_light}
\PP_{p,t}[C_{t}(x) \cap C_{t}(y) \neq \emptyset] \leq 2\PP_{p,t}[C_{t}(x) \cap \{x+\partial B_{\sfrac{d}{2}}\} \neq \emptyset] \leq 2\useconstant{c:large_cone_of_light}e^{-\sfrac{d}{2}}.
\end{equation}
By possibly increasing the value of $\useconstant{c:large_cone_of_light}$, the estimate above holds true for all $x, y \in \ZZ^{2}$.

\newconstant{c:covariance}

\par If $C_{t}(x) \cap C_{t}(y) = \emptyset$, then $\eta_{t}(x)$ and $\eta_{t}(y)$ are determined by disjoint parts of the initial condition. In particular, directly from~\eqref{eq:disjoing_cone_of_light} we obtain
\begin{equation}
\cov(\eta_{t}(x), \eta_{t}(y)) \leq \useconstant{c:covariance}e^{-\frac{\dist(x,y)}{2}},
\end{equation}
for some $\useconstant{c:covariance}=\useconstant{c:covariance}(t)$.

\newconstant{c:cov_circ}

\par We now collect some consequences of the estimates above that will be used in the rest of the paper. First, if $A$ is an event that depends on the configuration $\eta_{t}(x)$ only on sites inside $[-n,n]^{2}$ and $B$ is an event that depends on the configuration on sites outside $[-2n,2n]^{2}$, then
\begin{equation}\label{eq:correlation_decay_radius}
\begin{split}
\PP_{p,t}[A \cap B] & \leq \PP_{p,t}[A]\PP_{p,t}[B]+ \sum_{||x||=n, ||y||=2n}\PP_{p,t}[C_{t}(x) \cap C_{t}(y) \neq \emptyset] \\ & \leq \PP_{p,t}[A]\PP_{p,t}[B]+\useconstant{c:cov_circ}n^{2}e^{-\sfrac{n}{2}},
\end{split}
\end{equation}
for some $\useconstant{c:cov_circ}=\useconstant{c:cov_circ}(t)>0$.

\newconstant{c:decoupling}

\par We can also obtain the following bound: If $B_{1}$ and $B_{2}$ are two boxes with
\begin{equation}
\dist(B_{1}, B_{2}) \geq 3e^{2}t,
\end{equation}
then, for any events $A$ and $B$ with respective supports in the boxes $B_{1}$ and $B_{2}$, we have
\begin{equation}\label{eq:decoupling}
\begin{split}
\PP_{p,t}[A \cap B] & \leq \PP_{p,t}[A]\PP_{p,t}[B]+ \sum_{x \in \partial B_{1}, y \in \partial B_{2}}\PP_{p,t}[C_{t}(x) \cap C_{t}(y) \neq \emptyset] \\ & \leq \PP_{p,t}[A]\PP_{p,t}[B]+\useconstant{c:decoupling}| \partial B_{1}||\partial B_{2}|e^{-\frac{\dist(B_{1}, B_{2})}{2}}.
\end{split}
\end{equation}

\begin{remark}
All the constants above may depend on $t$ and can be taken to be non-decreasing in the variable $t$. This is a direct consequence of the relation $C_{t}(x) \subset C_{s}(x)$, if $t \leq s$.
\end{remark}

\subsection{Positive association}\label{subsec:fkg}
~
\par A distribution $\mu$ on $\{0,1\}^{S}$ is said positively associated if, for any bounded non-decreasing functions $f,g:\{0,1\}^{S} \to [0,1]$,
\begin{equation}\label{eq:general_fkg}
\EE_{\mu}[fg] \geq \EE_{\mu}[f]\EE_{\mu}[g],
\end{equation}
and we call the estimate above the FKG inequality.

\begin{remark}
Notice that, if~\eqref{eq:general_fkg} holds for any two non-decreasing functions, the same is true for any pair of non-increasing functions. To verify this, it is enough to apply~\eqref{eq:general_fkg} to $1-f$ and $1-g$, where $f$ and $g$ are any two non-increasing functions.
\end{remark}

\par In this section, we prove this property for the measures $\PP_{p,t}$. As an auxiliary result, we use the following.

\begin{teo}[\cite{harris}] Let $(X_{t})_{t \geq 0}$ be a Markov process with space state $\{0,1\}^{S}$, where $S$ is a finite set. Assume that the Markov process satisfies
\begin{enumerate}
\item $|\{x: X_{t-}(x) \neq X_{t}(x)\}| \leq 1$, for all $t \geq 0$;
\item For any non-decreasing function $f:\{0,1\}^{S} \to [0,1]$, $X_{0} \mapsto \EE_{X_{0}}[f(X_{t})]$ is non-decreasing for every $t \geq 0$.
\end{enumerate}
If the initial distribution of $X_{0}$ is positively associated, then the same holds for every fixed finite time.
\end{teo}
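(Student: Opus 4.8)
The plan is to interpolate in time between the initial measure $\mu$ and the law $\mu_t$ of $X_t$, and to show that an appropriate correlation functional is non-decreasing along this interpolation. First I would extract from hypothesis (1) the structure of the generator $L$: on the finite space $\{0,1\}^S$ an off-diagonal entry $L(\eta,\xi)$ with $\eta\neq\xi$ is a jump rate, so (1) forces $L(\eta,\xi)=0$ unless $\xi=\eta^x$ for some $x\in S$; equivalently $L=\sum_{x\in S}L_x$ with $L_xh(\eta)=c(x,\eta)\bigl(h(\eta^x)-h(\eta)\bigr)$ and $c(x,\cdot)\ge 0$. Hypothesis (2) is precisely the statement that each $P_s=e^{sL}$ maps non-decreasing functions to non-decreasing functions.

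I would then fix $t\ge 0$ and non-decreasing $f,g\colon\{0,1\}^S\to[0,1]$, set $F_s=P_{t-s}f$ and $G_s=P_{t-s}g$ for $s\in[0,t]$ (these remain non-decreasing by (2)), and study $\phi(s)=\EE_{\mu_s}[F_sG_s]$, where $\mu_s$ denotes the law of $X_s$, so $\mu_0=\mu$. At the right endpoint $\phi(t)=\EE_{\mu_t}[fg]$, while at the left endpoint, using that $P_tf$ and $P_tg$ are non-decreasing and that $\mu$ is positively associated,
\[
\phi(0)=\EE_{\mu}\bigl[(P_tf)(P_tg)\bigr]\ge\EE_{\mu}[P_tf]\,\EE_{\mu}[P_tg]=\EE_{\mu_t}[f]\,\EE_{\mu_t}[g].
\]
Hence the FKG inequality for $\mu_t$ follows as soon as $\phi$ is shown to be non-decreasing on $[0,t]$.

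To see that, I would differentiate (legitimate since everything depends smoothly on $s$ on a finite space) and use $\tfrac{d}{ds}\mu_s=\mu_sL$, $\tfrac{d}{ds}F_s=-LF_s$, $\tfrac{d}{ds}G_s=-LG_s$; the three terms recombine into a \emph{carr\'e du champ},
\[
\phi'(s)=\EE_{\mu_s}\bigl[L(F_sG_s)-F_s\,LG_s-G_s\,LF_s\bigr]=\sum_{x\in S}\EE_{\mu_s}\Bigl[c(x,\eta)\bigl(F_s(\eta^x)-F_s(\eta)\bigr)\bigl(G_s(\eta^x)-G_s(\eta)\bigr)\Bigr].
\]
Since $\eta$ and $\eta^x$ differ in a single coordinate they are comparable, so $F_s(\eta^x)-F_s(\eta)$ and $G_s(\eta^x)-G_s(\eta)$ have the same sign, and as $c(x,\eta)\ge 0$ the integrand is non-negative \emph{pointwise}; thus $\phi'(s)\ge 0$. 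Combining with the endpoint computations, $\EE_{\mu_t}[fg]=\phi(t)\ge\phi(0)\ge\EE_{\mu_t}[f]\,\EE_{\mu_t}[g]$.

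The step that needs the most care is the algebra producing the carr\'e-du-champ identity together with pinpointing where each hypothesis is used: condition (1) is exactly what guarantees that the one-coordinate difference $\eta\leftrightarrow\eta^x$ is comparable, hence that the bilinear form is pointwise non-negative (if two coordinates could flip simultaneously the two differences might have opposite signs and positivity would fail), whereas condition (2) enters only to keep $F_s$ and $G_s$ non-decreasing along the flow, which is needed both at $s=0$ and inside the carr\'e du champ. The remaining ingredients — differentiation under the finite sum, the identities $\partial_sP_{t-s}=-LP_{t-s}$ and $\EE_\mu[P_tf]=\EE_{\mu_t}[f]$, and reducing the general FKG inequality to bounded non-decreasing $[0,1]$-valued test functions — are routine on a finite state space, and I would treat them briefly.
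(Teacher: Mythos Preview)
The paper does not supply its own proof of this statement: it is quoted as a result of Harris and used as a black box to deduce Proposition~\ref{prop:fkg}. So there is nothing in the paper to compare against.

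That said, your argument is correct and is essentially Harris's original proof. The interpolation $\phi(s)=\EE_{\mu_s}[(P_{t-s}f)(P_{t-s}g)]$ together with the carr\'e-du-champ identity
\[
\phi'(s)=\sum_{x\in S}\EE_{\mu_s}\Bigl[c(x,\eta)\bigl(F_s(\eta^x)-F_s(\eta)\bigr)\bigl(G_s(\eta^x)-G_s(\eta)\bigr)\Bigr]
\]
is exactly the mechanism Harris uses. Your identification of where each hypothesis enters is accurate: the single-flip condition (1) is what makes $\eta$ and $\eta^x$ comparable so that the two increments share a sign, and the monotonicity-preservation condition (2) is what keeps $F_s,G_s$ non-decreasing so that the sign is actually common; the endpoint inequality at $s=0$ also consumes (2) together with positive association of $\mu$. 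On a finite state space the analytic steps (existence of the $Q$-matrix, smooth dependence on $s$, differentiation under the sum) are indeed routine. One small stylistic point: you might say explicitly that a continuous-time Markov process on a finite set is a jump process with a bounded generator, so that hypothesis (1), stated pathwise, translates into the structural statement $L(\eta,\xi)=0$ whenever $\eta,\xi$ differ in more than one coordinate; you assert this but it is worth one line of justification.
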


\par With the last theorem, we can conclude that the same type of inequality holds for majority dynamics.

\begin{prop}\label{prop:fkg}
If $A$ and $B$ are increasing events, then
\begin{equation}
\PP_{p,t}[A \cap B] \geq \PP_{p,t}[A] \PP_{p,t}[B].
\end{equation}
\end{prop}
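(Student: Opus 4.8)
The plan is to apply the cited theorem of Harris (stated just above) to the finite-volume version of majority dynamics and then pass to the infinite-volume limit. The first step is the reduction to finite volume: since $A$ and $B$ are increasing events, by a standard approximation argument it suffices to prove the inequality when $A$ and $B$ are cylinder events depending only on the coordinates in some finite box $\Lambda \subset \ZZ^{2}$. Fix a large box $S \supset \Lambda$ (large enough that it also contains $C_{t}(x)$ for all $x \in \Lambda$ with high probability, or, more cleanly, run majority dynamics with a boundary convention on $S$) and consider the finite-state Markov process $(X_{s})_{s \geq 0}$ on $\{0,1\}^{S}$ evolving by the same majority rule restricted to $S$, with initial distribution the product measure $\ber(p)^{\otimes S}$.

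The second step is to verify the two hypotheses of Harris's theorem for this finite process. Hypothesis (1) is immediate from the graphical construction: clocks $\mathscr{P}_{x}$ are independent Poisson processes, so almost surely no two ring simultaneously, hence at most one coordinate changes at any time. Hypothesis (2) is the monotonicity of the semigroup: $X_{0} \mapsto \EE_{X_{0}}[f(X_{t})]$ is non-decreasing for every non-decreasing $f$. This is where the real content lies, and I would establish it by a coupling / attractiveness argument — construct two copies of the dynamics driven by the \emph{same} Poisson clocks and the same tie-breaking rule, started from $\eta_{0} \leq \eta_{0}'$ coordinatewise, and show that the partial order is preserved for all time. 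The key point is that the majority update rule is monotone: if $\eta \leq \eta'$ and a clock rings at $x$, then the new value at $x$ in the smaller configuration is $\leq$ the new value in the larger one, because $\sum_{y \sim x}\eta(y) \leq \sum_{y \sim x}\eta'(y)$, and the tie-breaking rule (look at $\eta(x)$ itself) is also monotone in the configuration — one checks the handful of cases directly, treating the threshold-$2$ versus threshold-$3$ asymmetry and the tie case carefully. Since the initial product measure $\ber(p)^{\otimes S}$ is positively associated (Harris's inequality for product measures), Harris's theorem then gives that the law of $X_{t}$ is positively associated on $\{0,1\}^{S}$, i.e. $\PP_{p,t}^{S}[A \cap B] \geq \PP_{p,t}^{S}[A]\PP_{p,t}^{S}[B]$.

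The final step is to remove the finite-volume restriction. As the box $S$ exhausts $\ZZ^{2}$, the finite-volume dynamics couples with the infinite-volume dynamics on any fixed box $\Lambda$ up to time $t$ with probability tending to $1$: the two agree on $\Lambda$ unless some path of clock rings connects $\partial S$ to $\Lambda$ before time $t$, which by the estimate~\eqref{eq:correlation_decay} has probability at most $\useconstant{c:large_cone_of_light}e^{-u}$ with $u \to \infty$. Hence $\PP_{p,t}^{S}[A] \to \PP_{p,t}[A]$, $\PP_{p,t}^{S}[B] \to \PP_{p,t}[B]$, and $\PP_{p,t}^{S}[A \cap B] \to \PP_{p,t}[A \cap B]$, and the FKG inequality passes to the limit. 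Finally, by the remark preceding the proposition, the inequality for increasing events extends to arbitrary bounded non-decreasing functions via approximation by finite combinations of indicators of increasing cylinder events. The main obstacle I anticipate is the careful verification of monotonicity of the majority update including the tie-breaking convention — it is elementary but must be done by cases, since an unfortunate choice of tie rule could break attractiveness.
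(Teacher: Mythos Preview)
Your proposal is correct and follows essentially the same approach as the paper: reduce to cylinder events, apply Harris's theorem to the finite-volume dynamics, and pass to the infinite-volume limit using the cone-of-light estimate~\eqref{eq:correlation_decay}. You are in fact more explicit than the paper about verifying hypothesis~(2) of Harris via the attractiveness coupling; the paper simply invokes the theorem without spelling out the monotonicity check.
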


\begin{proof}
Consider first the case when $A$ and $B$ have finite support. Recall that, for $n \in \NN$, $B_{n}=[-n,n]^{2}$ and fix $m$ such that $\text{supp}(A) \cup \text{supp}(B) \subset B_{m}$.

Recall that, for $x \in \ZZ^{2}$, $C_{t}(x)$ denotes the set of points in $\ZZ^{2}$ whose opinions are queried when determining $\eta_{t}(x)$ and let $C_{n}$ denote the event that, for some $x \in B_{m}$, $C_{t}(x) \cap B_{n+m}^{c} \neq \emptyset$. In $C_{n}^{c}$, for every $x \in B_{m}$, $\eta_{t}(x)$ is determined by the initial configuration restricted to $B_{n+m}$. We consider $m \leq n$ and denote by $(\eta_{t}^{n})_{t \geq 0}$ a majority dynamics evolving on the finite graph $B_{2n}$. If one uses the same Poisson clocks for both $(\eta_{t}^{n})_{t \geq 0}$ and $(\eta_{t})_{t \geq 0}$, we get
\begin{equation}
[\eta_{t} \in A \cap B] = [\eta_{t}^{n} \in A \cap B] \qquad \text{in } C_{n}^{c},
\end{equation}
and the same is true if we replace $A \cap B$ by $A$ or $B$ in the expression above. This implies
\begin{equation}
\begin{split}
\PP_{p,t}[A \cap B] & \geq \PP[ \eta_{t} \in A \cap B, C_{n}^{c}] \\
& = \PP_{p}[\eta_{t}^{n} \in A \cap B, C_{n}^{c}] \\
& \geq \PP_{p}[\eta_{t}^{n} \in A \cap B]-\PP_{p}[C_{n}] \\
& \geq \PP_{p}[\eta_{t}^{n} \in A] \PP_{p}[\eta_{t}^{n} \in B]-\PP_{p}[C_{n}] \\
& \geq \PP_{p,t}[A]\PP_{p,t}[B]-3\PP_{p}[C_{n}].
\end{split}
\end{equation}

Now, according to Equation~\eqref{eq:correlation_decay}, we have
\begin{equation}
\PP_{p}[C_{n}] \leq \useconstant{c:large_cone_of_light}(2n+1)^{2}e^{-n}.
\end{equation}
By taking the limit $n \to \infty$, we conclude this first part.

The general case follows the usual proof of the FKG inequality (see Theorem~2.4 from~\cite{grimmett}).
\end{proof}

\begin{remark}
We observe that the FKG inequality also holds for the limiting configuration $\eta_{\infty}$. For a proof, one uses bounded convergence theorem for the case when the increasing events have finite support and the usual martingale convergence approach to conclude the general case.
\end{remark}

\section{Majority dynamics percolation}\label{sec:percolation}
~
\par We now start to study percolative properties of majority dynamics. In this section, we present some initial results. These are simple facts that follow mainly from the choice of $\ZZ^{2}$ as our underlying graph.

\par We state the results from~\cite{gandolfi}, that consider general two-dimensional site percolation models.

\begin{teo}[\cite{gandolfi}]\label{t:gandolfi}
Assume $\PP$ is a probability measure on $\{0,1\}^{\ZZ^{2}}$ that satisfies
\begin{enumerate}
\item $\PP$ is invariant under horizontal and vertical translations and axis reflections;
\item $\PP$ is ergodic (separately) under horizontal and vertical translations;
\item $\PP$ is positively associated;
\item Percolation occurs with positive probability under $\PP$.
\end{enumerate}
If all the hypotheses above are satisfied, there exists almost surely exactly one infinite open component. Besides, any finite set of sites is surrounded by an occupied circuit with probability one.
\end{teo}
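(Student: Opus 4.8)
The plan is to adapt the strategy of~\cite{gandolfi}: reduce the statement to one crossing estimate, use it to surround every box by an open circuit, and then combine such circuits with planarity to force uniqueness. First I would use hypothesis~(2). The event that $\eta$ percolates, and for each $k$ the event that there are exactly $k$ infinite open clusters, are invariant under all translations and hence under horizontal translations, so they have probability $0$ or $1$; together with hypothesis~(4) this yields that percolation has probability one and that the number $N$ of infinite open clusters is almost surely a deterministic constant $N_{0}\in\{1,2,\dots,\infty\}$. Writing $B_{n}=[-n,n]^{2}$, the events that $B_{n}$ meets an infinite open cluster increase in $n$ to $\{\eta\text{ percolates}\}$, so their probabilities tend to $1$.

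The technical core is the claim that, for every fixed $\lambda\ge 1$, the probability of an open left--right crossing of $[0,\lambda n]\times[0,n]$ tends to $1$ as $n\to\infty$. The easy half is soft: on the event that $B_{n}$ meets an infinite cluster there is an open path from $B_{n}$ to $\partial B_{2n}$ contained in $B_{2n}$, and cutting it at its last visit to $\partial B_{n}$ and its first subsequent visit to $\partial B_{2n}$ exhibits an open crossing, in the short direction, of one of the four rectangles that make up the annulus $B_{2n}\setminus B_{n}$; since the union of these four events has probability tending to $1$ and, by hypothesis~(1), they come in pairs of equal probability, the square-root trick (FKG from hypothesis~(3) applied to the complementary decreasing events) shows that short-way (``easy'') crossings of fixed-shape rectangles have probability tending to $1$. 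The hard half is to upgrade easy crossings to long-way (``hard'') crossings and then chain them, via FKG, to all aspect ratios and both orientations; this is an RSW-type bootstrap that uses only the FKG inequality and the invariances of hypothesis~(1), and it carries essentially all of the work. I expect this step to be the main obstacle; it is the content of~\cite{gandolfi} and is also covered by the RSW theory of~\cite{tassion}.

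Granting the crossing estimate, I would construct circuits by gluing open left--right crossings of the horizontal rectangles $[-3n,3n]\times[n,3n]$ and $[-3n,3n]\times[-3n,-n]$ to open top--bottom crossings of the vertical rectangles $[-3n,-n]\times[-3n,3n]$ and $[n,3n]\times[-3n,3n]$: consecutive crossings necessarily meet inside the $2n\times 2n$ corner square they share, since a left--right crossing and a top--bottom crossing of a square always intersect, so their union contains an open circuit surrounding $B_{n}$ inside $B_{3n}$. By FKG and the crossing estimate, the probability $\PP[D_{n}]$ of this configuration tends to $1$. Since the event $F_{m}$ that $B_{m}$ is enclosed by \emph{some} open circuit is decreasing in $m$ (any circuit enclosing $B_{m+1}$ also encloses $B_{m}$) and $\PP[F_{m}]\ge\PP[D_{m}]\to 1$, we get $\PP[F_{m}]=1$ for every $m$; hence almost surely every box, and therefore every finite set of sites, is enclosed by an open circuit, which is the second assertion.

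Finally, for uniqueness I would argue by contradiction on the almost sure event just obtained. Suppose $N_{0}\ge 2$, pick distinct infinite open clusters $C\ne C'$ and an $m$ so large that both meet $B_{m}$, and let $\Gamma$ be an open circuit enclosing $B_{m}$, with $\Gamma\subset B_{M}$ for some $M$. Each of $C$ and $C'$ is unbounded, so it has a vertex outside $B_{M}$ as well as a vertex in $B_{m}$; hence it meets both the bounded and the unbounded component of $\ZZ^{2}\setminus\Gamma$, so it meets $\Gamma$, and being connected and open it then contains all of $\Gamma$. Thus $C=C'$, a contradiction, and therefore $N_{0}=1$.
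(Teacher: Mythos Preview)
The paper does not prove Theorem~\ref{t:gandolfi}; it is quoted from~\cite{gandolfi} and used as a black box, so there is no proof in the paper to compare yours against. Your outline is nevertheless a faithful summary of the Gandolfi--Keane--Russo strategy and is essentially correct: ergodicity fixes the number of infinite clusters to a constant, the cut-and-square-root argument you give yields easy annulus crossings with probability tending to $1$, an RSW-type bootstrap (which you rightly flag as the main obstacle and defer to the references) upgrades these to hard crossings, FKG glues four hard crossings into a circuit, and your planarity argument at the end forces uniqueness. It is worth observing that the ``technical core'' you isolate---hard crossings $\to 1$ under percolation---is exactly what the paper proves later, for its specific model, as Theorem~\ref{t:box_crossing}; the chain of lemmas in Section~\ref{sec:criticality} (square crossings bounded below, RSW, circuits, then crossings $\to 1$) is essentially the argument you sketch, so in effect the paper re-derives the circuit part of~\cite{gandolfi} for $\PP_{p,t}$ rather than invoking it.

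One genuine subtlety your sketch leaves to the citation: the hypotheses grant only axis reflections, not $90^{\circ}$ rotations, so the four annulus crossing events really do split into two pairs (as you correctly note), and the square-root trick a priori controls only one orientation of easy crossings. Obtaining hard crossings in \emph{both} orientations---needed for your circuit glue---without $90^{\circ}$ symmetry is precisely the delicate point that~\cite{gandolfi} handles and that the paper sidesteps because $\PP_{p,t}$ happens to be $90^{\circ}$-invariant. Your deferral here is appropriate, but be aware that this is where the real work hides.
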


\par This result has some consequences when it comes to majority dynamics. First of all, for $p=\frac{1}{2}$, symmetry and Theorem~\ref{t:gandolfi} imply that percolation is not possible at any fixed time $t \geq 0$. This gives us that
\begin{equation}
p_{c}(t) \geq \frac{1}{2}, \quad \text{for all } t \geq 0.
\end{equation}

\par One may also ask whether exceptional times exist in this case, i.e., ask if $(\eta_{t})_{t \geq 0}$ percolates at some random time $\tau$. This corresponds to asking if the probability
\begin{equation}
\PP_{\sfrac{1}{2}}[\eta_{t} \text{ percolates, for some } t \geq 0]
\end{equation}
is positive or not.

\par If the probability above is positive, there is a positive probability that this happens before some large time $T$. Since we know that percolation is preserved by the dynamics, there is a positive probability that percolation occurs at time $T$. By ergodicity, this probability is one, contradicting the fact that percolation cannot exist at any fixed time.

\bigskip

\par Theorem~\ref{t:gandolfi} also allows us to prove some percolative properties of the asymptotic configuration $\eta_{\infty}$. Suppose that percolation happens for some time $t \geq 0$. We already know that percolation occurs for all times $s \geq t$. However, our argument does not imply that percolation holds for the limiting configuration. To conclude this, observe that, if we have percolation at time $t$, there exists an infinite open path of vertices $x_{0}, x_{1}, x_{2}, \dots$. At the same time, Theorem~\ref{t:gandolfi} implies that $x_{0}$ is surrounded by an open circuit $y_{0}, y_{1}, \dots, y_{k}=y_{0}$. We have $x_{i}=y_{j}$, for some $i$ and $j$. The collection of open vertices $\{ y_{0}, y_{1}, \dots, y_{k-1} \} \cup \{ x_{i+1}, x_{i+2}, \dots \}$ forms an stable open structure and concludes the proof.

\subsection{Box crossing functions}\label{subsec:bcf}
~
\par We now define the main type of event that we will consider in the rest of the text. We will focus in understanding basic properties of the box crossing probabilities.

\par Given $R \subseteq \ZZ^{2}$ and two disjoint subsets $A$ and $B$ of $R$, define the event
\begin{equation}
\left[A  \overset{R}{\longleftrightarrow} B\right]
\end{equation}
as the existence of an open path contained in $R$ connecting $A$ to $B$. Let also
\begin{equation}
\left[A \longleftrightarrow \infty \right]
\end{equation}
denote the event that there exists an infinite open path starting at some site in $A$.

\par For $m, n \in \NN$, define the event $H(n,m)$ as
\begin{equation}
H(n,m)=\left[\{1\} \times [1,m] \overset{R_{n,m}}{\longleftrightarrow} \{n\} \times [1,m]\right],
\end{equation}
where $R_{n,m}=[1,n] \times [1,m]$, the existence of an open crossing of $R_{n,m}$ connecting its left boundary to its right boundary.

\par We first prove an easy lemma regarding these probabilities.
\begin{lemma}\label{lemma:asymptotic_crossing}
If $\sup_{n} \PP_{p,t}[H(3n,n)]=1$, then $\lim_{n}\PP_{p,t}[H(3n,n)]=1$.
\end{lemma}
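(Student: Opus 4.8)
Write $a_n=\PP_{p,t}[H(3n,n)]$, so the hypothesis reads $\sup_n a_n=1$ and we must prove $a_n\to1$. First a reduction. If $a_m=1$ for some $m$, then gluing translates of the hard crossing $H(3m,m)$ to vertical crossings of $m\times 2m$ rectangles (whose probability, by rotation invariance, equals $\PP_{p,t}[H(2m,m)]\ge a_m=1$) with the FKG inequality (Proposition~\ref{prop:fkg}) gives $a_n=1$ for all $n\ge m$, and we are done. Otherwise $a_n<1$ for every $n$; the supremum of $a_n$ over the finitely many scales below any fixed bound is then $<1$, so $\sup_n a_n=1$ forces $\limsup_n a_n=1$. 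Thus, either way, we only need: for every $\eta>0$ there is a (large) scale $m_0$ with $a_{m_0}\ge1-\eta$, from which we will deduce $a_n\ge 1-C\eta$ for all $n\ge m_0$.

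The engine is a one-scale renormalisation. Fix a large $m$ with $a_m\ge1-\epsilon$ ($\epsilon$ small) and an integer $R$ to be tuned; put $n=Rm$. Inside $[1,3n]\times[1,n]$ stack $L\asymp R$ pairwise disjoint horizontal ``brick-walls'', each of height $\asymp m$ and separated by vertical gaps of height $\ge m\ge 3e^{2}t$; a wall is a chain of $\asymp R$ consecutive $3m\times m$ rectangles crossed in the hard direction, successive ones joined by a vertical crossing of an $m\times 2m$ rectangle lying in their overlap, so that — by the planar fact that a hard crossing of a box meets any transverse crossing of an overlapping box — a fully crossed wall yields a left--right crossing of $[1,3n]\times[1,n]$. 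By FKG and rotation/reflection invariance a single wall is fully crossed with probability at least $(1-\epsilon)^{O(R)}$, and the failure events of distinct walls have supports in boxes at mutual distance $\ge m$, so the decoupling bound~\eqref{eq:decoupling} renders them approximately independent and a union bound gives
\[
1-a_n\ \le\ \bigl(1-(1-\epsilon)^{O(R)}\bigr)^{L}+C\,R^{3}m^{2}e^{-m/2}.
\]
Taking $R\asymp\min\{c/\epsilon,\ e^{m/10}\}$ keeps the per-wall probability bounded below by a positive constant while forcing both terms to $0$; for $n$ within a factor $\epsilon^{-1/2}$ of $m$ a single wall with a plain FKG bound already gives $1-a_n=O(\sqrt\epsilon)$. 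Hence there are $\Phi(m)$ with $\Phi(m)/m\to\infty$ and $\psi(m)\to0$ (as $m\to\infty$ with $a_m\to1$) such that $1-a_n\le\psi(m)$ for all $n\in[m,\Phi(m)]$; applied at $n=\Phi(m)$ this says the estimate reproduces itself.

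To finish, given $\eta>0$ pick (using $\limsup_n a_n=1$ and $m^{2}e^{-m/5}\to0$) a large $m_0$ with $\psi(m_0)<\eta$, and iterate $m_{k+1}=\Phi(m_k)$. The intervals $[m_k,m_{k+1}]$ chain together to cover $[m_0,\infty)$, and since the errors do not grow along the iteration one gets $1-a_n\le\eta$ for every $n\ge m_0$. As $\eta>0$ was arbitrary, $\lim_n a_n=1$.

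The crux — and the only real difficulty — is the one-scale step: arranging the geometry so that a row of hard crossings glued by transverse crossings genuinely crosses the whole $3n\times n$ rectangle, packing in enough disjoint rows with gaps $\ge 3e^{2}t$, and tuning $R$ so that simultaneously the per-row probability $(1-\epsilon)^{O(R)}$ stays bounded away from $0$, the union over the $\asymp R$ rows overwhelms $1$, and the $\binom{L}{2}$ decoupling errors from~\eqref{eq:decoupling} remain summable and $o(1)$; this balance is exactly what forces the cap $R\lesssim e^{m/10}$ when $\epsilon=1-a_m$ decays very fast in $m$.
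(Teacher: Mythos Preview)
Your argument is essentially sound but follows a different route from the paper. The paper runs a direct dyadic-type renormalisation: with $L_{k+1}=3L_k$, it notes that if $H(3L_{k+1},L_{k+1})$ fails then some rectangle in a seven-rectangle chain along the bottom edge fails \emph{and} some rectangle in an analogous chain along the top edge fails; since the two chains sit at distance $L_k$, the decoupling bound~\eqref{eq:decoupling} yields the quadratic recursion $p_{k+1}\le 49\bigl(p_k^2+O(L_k^2 e^{-L_k/2})\bigr)$, which, once $p_0$ is below a fixed threshold, forces $p_k\le c\,2^{-k}$; a one-line interpolation ($\PP_{p,t}[H(3n,n)^c]\le 7p_k$ for $L_k\le n<L_{k+1}$) finishes. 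Your scheme instead stacks $\asymp R$ independent-looking walls and tunes $R\asymp\min\{c/\epsilon,e^{m/10}\}$ so the per-wall probability stays bounded below while the product over walls vanishes; this buys a stronger one-step gain but at the cost of a more delicate iteration. Two points of execution deserve tightening: (i) the wall geometry is slightly garbled---an $m\times 2m$ vertical joiner cannot ``lie in the overlap'' of two $3m\times m$ rectangles at the same height; the standard fix (and what the paper effectively does) is either to zigzag between two heights or to use $m\times 3m$ joiners; (ii) the assertion that ``errors do not grow along the iteration'' needs a line of justification: one must check that whenever $1-a_{m_k}\le\eta$ and $m_k$ exceeds a threshold depending only on $\eta$, the one-step output is again $\le\eta$, and this splits into the capped and uncapped regimes for $R$. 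Both are fixable, but the paper's quadratic recursion sidesteps them entirely.
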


\begin{proof}
We investigate the collection of events $H(3n,n)^{c}$. Define the sequence
\begin{equation}
L_{0}=n_{0}, \qquad \text{and} \qquad L_{k+1}=3L_{k},
\end{equation}
where $n_{0}$ will be taken large enough, and consider the probabilities
\begin{equation}\label{eq:prob_seq}
p_{k}=\PP_{p,t}[H(3L_{k},L_{k})^{c}].
\end{equation}

Suppose we are in the event $H(3L_{k+1},L_{k+1})^{c}$, and consider the collection of rectangles
\begin{equation}\label{eq:rectangles}
\begin{split}
[2xL_{k}+1, (2x+3)L_{k}] \times [1,L_{k}], & \qquad x \in \{0,1,2,3\}, \\
[2xL_{k}+1, (2x+1)L_{k}] \times [-2L_{k}+1, L_{k}], & \qquad x \in \{1,2,3\}.
\end{split}
\end{equation}

Observe that, if all the seven rectangles listed above are crossed in the hard direction, then we can concatenate these paths and find a hard crossing of $[1,3L_{k+1}] \times [1,L_{k+1}]$. This implies that, in $H(3L_{k+1},L_{k+1})^{c}$, at least one of the rectangles in~\eqref{eq:rectangles} is not crossed in the hard direction.

Similarly, we can construct another set of seven rectangles along the upper boundary of $[1,3L_{k+1}] \times [1,L_{k+1}]$ with the same property.

The minimum distance between rectangles from the upper and lower chain is $L_{k}$. Union bound and Equation~\eqref{eq:decoupling} imply
\begin{equation}
p_{k+1} \leq 49(p_{k}^{2}+64\useconstant{c:decoupling}L_{k}^{2}e^{-\sfrac{L_{k}}{2}}),
\end{equation}
whenever $L_{k} \geq 3e^{2}t$.

Choose $n_{0} \geq 3e^{2}t$ large so that $p_{0} \leq (4 \cdot 49)^{-1}$ (see~\eqref{eq:prob_seq}) and
\begin{equation}
49^{2} \cdot 512\useconstant{c:decoupling} n^{3}e^{-\sfrac{n}{2}} \leq \frac{1}{2}, \qquad \qquad \text{for all } n \geq n_{0}.
\end{equation}

Suppose that $p_{k} \leq (4\cdot 49)^{-1}2^{-k}$ and observe that
\begin{equation}
\begin{split}
4\cdot 49 \cdot2^{k+1}p_{k+1} & \leq 4\cdot 49^{2}\cdot2^{k+1}(p_{k}^{2}+64\useconstant{c:decoupling}L_{k}^{2}e^{-\sfrac{L_{k}}{2}}) \\
& \leq 4\cdot 49^{2}\cdot2^{k+1}\left(\frac{1}{16\cdot 49^{2}}\frac{1}{2^{2k}}+64\useconstant{c:decoupling}L_{k}^{2}e^{-\sfrac{L_{k}}{2}}\right) \\
& \leq \frac{1}{2} + 49^{2}\cdot 512 \useconstant{c:decoupling}L_{k}^{3}e^{-\sfrac{L_{k}}{2}} \leq 1.
\end{split}
\end{equation}

This implies $p_{k} \leq  (4\cdot 49)^{-1}2^{-k}$, for all $k \geq 0$ and that $\PP_{p,t}[H(3n,n)^{c}]$ converges to zero along the subsequence $L_{k}$.

Now, given $n \in \NN$ large, consider $k$ with
\begin{equation}
L_{k} \leq n < L_{k+1}.
\end{equation}
With a concatenation using the rectangles described in \eqref{eq:rectangles}, we get
\begin{equation}
\PP_{p,t}[H(3n,n)^{c}] \leq 7\PP_{p,t}[H(3L_{k},L_{k})^{c}]
\end{equation}
and conclude the proof.
\end{proof}

\par The main result regarding crossing probabilities is Russo-Seymour-Welsh theory.
\begin{teo}\label{t:rsw}
If $\inf_{n} \PP_{p,t}[H(n,n)]>0$, then, for all $\lambda>0$, we also have $\inf_{n} \PP_{p,t}[H(\lambda n,n)]>0$.
\end{teo}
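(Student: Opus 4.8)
The plan is to reproduce the Russo--Seymour--Welsh argument of Tassion~\cite{tassion} in our dependent setting. That argument rests on exactly three structural features: invariance under translations, axis reflections and the rotation by $\pi/2$; the FKG inequality; and a way of decoupling events supported on regions that are far apart. The first two hold for $\PP_{p,t}$ --- the invariances were recorded in Section~\ref{sec:model} and FKG is Proposition~\ref{prop:fkg} --- while the third is supplied by the decoupling estimate~\eqref{eq:decoupling}, at the price of an additive error decaying exponentially in the separation of the regions.

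\par The execution follows Tassion's steps. First, from $\inf_n\PP_{p,t}[H(n,n)]>0$ one obtains, using the $\pi/2$-rotation invariance, that $n\times n$ boxes are crossed in either direction with probability bounded below, and then, by FKG and the square-root trick (legitimate by Proposition~\ref{prop:fkg}), uniform lower bounds on half-plane connection events: inside a box of the form $[-n,n]\times[0,n]$, a boundary segment of length of order $n$ is connected to a segment of order $n$ on the opposite side with probability bounded away from zero. Second, one introduces Tassion's width parameter $\alpha_n$, essentially the largest $\alpha$ for which $[-\alpha,\alpha]\times\{0\}$ is connected inside $[-n,n]\times[0,n]$ to the left side $\{-n\}\times[0,n]$ with probability bounded below; one records its elementary monotonicity and establishes the key inequality: for every scale $n$, either $\PP_{p,t}[H(2n,n)]\ge c$ already holds for a fixed $c>0$, or $\alpha$ grows by a definite factor between scale $n$ and a fixed multiple of $n$. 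Third, one runs the dichotomy: if the crossing estimate failed at all large scales, the growth of $\alpha_n$ would force $\alpha_n\ge cn$ along a geometric subsequence of scales, but whenever $\alpha_n$ is of order $n$ a bounded number of half-plane connections of length of order $n$ can be glued with FKG into an $H(2n,n)$ crossing of probability bounded below --- a contradiction. Hence $\inf_n\PP_{p,t}[H(2n,n)]>0$. Finally, one bootstraps to $H(\lambda n,n)$ for arbitrary $\lambda$ by concatenating $O(\lambda)$ overlapping translates of $2n\times n$ crossings with $n\times n$ crossings in the perpendicular direction, intersecting these increasing events with FKG; the case $\lambda\le1$ is immediate from square crossings.

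\par The dependent setting enters only in the gluings that combine events on disjoint regions, and each such step is replaced by~\eqref{eq:decoupling}, at the cost of an error of order $\useconstant{c:decoupling}\,|\partial B_1||\partial B_2|\,e^{-\dist(B_1,B_2)/2}$. In every gluing the disjoint regions can be arranged at distance of order $n$ while only polynomially many of them are combined at once ($O(\lambda)$ in the bootstrap, a bounded number per scale in the recursion), so the accumulated error is $o(1)$ as $n\to\infty$ and is absorbed into the constants; for the finitely many small scales $n\lesssim e^{2}t$, where~\eqref{eq:decoupling} is unavailable, each crossing probability is strictly positive (one may force the relevant finite box to have all opinions equal to $1$ at time $t$, assuming $p\in(0,1)$; the case $p=1$ is trivial), so the infimum over all $n$ remains positive. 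The main obstacle is therefore purely the second step: carrying over Tassion's construction of $\alpha_n$ and the doubling dichotomy, and checking that every appeal to spatial independence within it is genuinely licensed by~\eqref{eq:decoupling} with the regions macroscopically separated, so that the exponentially small error beats both the polynomial prefactor and the number of terms. Everything else is a routine use of FKG, the symmetries of $\PP_{p,t}$, and path concatenation.
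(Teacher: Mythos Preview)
Your proposal is correct and takes the same route as the paper, which simply writes ``The proof follows the same steps from~\cite{tassion} and we omit it here.'' One remark: the decoupling you invoke via~\eqref{eq:decoupling} is in fact not needed --- Tassion's RSW argument uses only the symmetries and the FKG inequality (every gluing is of increasing events), so Proposition~\ref{prop:fkg} together with the invariances already suffices, and the discussion of absorbing correlation errors can be dropped.
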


\par The proof follows the same steps from~\cite{tassion} and we omit it here.

\section{Criticality regime of $\PP_{p,t}$}\label{sec:criticality}
~
\par This section is devoted to the proof of Theorem~\ref{t:critical_probability}. The proof is based on the understanding of the supercritical phase. We first consider the values of $p$ and $t$ where percolation occurs and study crossing functions in this case. Then we conclude the proof of Theorem~\ref{t:critical_probability}.

\par The first theorem states that crossings are very likely if percolation occurs.
\begin{teo}\label{t:box_crossing}
If $\PP_{p,t}[\eta \text{ percolates}]>0$, then, for every $\lambda>0$,
\begin{equation}
\sup_{n}\PP_{p,t}[H(\lambda n,n)]=1.
\end{equation}
Also, $\lim_{n}\PP_{p,t}[H(\lambda n,n)]=1$.
\end{teo}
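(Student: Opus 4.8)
The plan is to reduce the statement to the single assertion $\sup_{n}\PP_{p,t}[H(3n,n)]=1$. Granting this, Lemma~\ref{lemma:asymptotic_crossing} immediately upgrades it to $\lim_{n}\PP_{p,t}[H(3n,n)]=1$. Furthermore, the initial sub-path of a left--right crossing of $[1,3m]\times[1,m]$ --- taken up to its first visit of the column $\{x=n\}$ --- is a left--right crossing of $[1,n]^{2}$ whenever $m\le n\le 3m$, so taking $m=\lceil n/3\rceil$ gives $\PP_{p,t}[H(n,n)]\ge\PP_{p,t}[H(3m,m)]$, whence $\lim_{n}\PP_{p,t}[H(n,n)]=1$, and by reflection symmetry the top--bottom crossing probability of $[1,n]^{2}$ also tends to $1$. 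A standard gluing of $O(\lambda)$ horizontal and vertical crossings of $n\times n$ squares via the FKG inequality (Proposition~\ref{prop:fkg}) then gives $\lim_{n}\PP_{p,t}[H(\lambda n,n)]=1$, and in particular $\sup_{n}\PP_{p,t}[H(\lambda n,n)]=1$, for every $\lambda>0$. So everything reduces to proving $\sup_{n}\PP_{p,t}[H(3n,n)]=1$.

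I would prove this by contradiction, using planar duality for the closed sites. Assume there is $\varepsilon>0$ with $\PP_{p,t}[H(3n,n)]\le 1-\varepsilon$ for every $n$. Inside the box $[1,3n]\times[1,n]$, the absence of an open left--right crossing is equivalent to the existence of a $*$-connected path of closed sites joining the top side to the bottom side; hence the event $\mathcal{D}_{n}$ that such a closed $*$-crossing exists satisfies $\PP_{p,t}[\mathcal{D}_{n}]\ge\varepsilon$ for every $n$.

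Next I would run Russo--Seymour--Welsh theory on the closed-site model. The closed sites of $\eta_{t}$, equipped with the $*$-adjacency, form a translation- and reflection-invariant planar percolation model, and $\PP_{p,t}$ satisfies the FKG inequality for decreasing events as well, so the machinery of~\cite{tassion} underlying Theorem~\ref{t:rsw} is available for it. Seeded with the uniform lower bound on the (easy-direction) crossings $\mathcal{D}_{n}$, it yields the box-crossing property for the closed $*$-model: a lower bound, uniform in $n$, on closed $*$-crossings of rectangles of every fixed aspect ratio. Gluing four such hard closed $*$-crossings of the rectangles forming an annulus --- once more by FKG for decreasing events --- then furnishes $\delta>0$ such that, writing $\mathcal{D}'_{n}$ for the event that a closed $*$-circuit surrounds $B_{n}$ inside the annulus $B_{3n}\setminus B_{n}$, one has $\PP_{p,t}[\mathcal{D}'_{n}]\ge\delta$ for all $n$.

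Finally I would decouple over a sparse sequence of scales. Fix $n_{1}<n_{2}<\cdots$ with $n_{i+1}\ge 6n_{i}+1$ and $n_{1}$ large. Since $(\mathcal{D}'_{n_{i}})^{c}$ depends on $\eta_{t}$ only through the sites in $B_{3n_{i}}\setminus B_{n_{i}}$, iterating the decoupling estimate~\eqref{eq:correlation_decay_radius} (peeling off the largest remaining scale at each step) gives
\[
\PP_{p,t}\Big[\bigcap_{i=1}^{K}(\mathcal{D}'_{n_{i}})^{c}\Big]\le (1-\delta)^{K-1}+\sum_{i\ge 1}\useconstant{c:cov_circ}\,(3n_{i})^{2}e^{-3n_{i}/2}.
\]
Letting $K\to\infty$, and then letting $n_{1}\to\infty$ so that the error series (dominated by its first term) vanishes, shows that $\PP_{p,t}[\,\text{there is no closed }*\text{-circuit surrounding the origin}\,]=0$. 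But a closed $*$-circuit surrounding the origin forces the open cluster of the origin to be finite, so $\PP_{p,t}[0\leftrightarrow\infty]=0$, and by translation invariance $\PP_{p,t}[\eta\text{ percolates}]=0$, contradicting the hypothesis. The step I expect to be the real obstacle is justifying that Tassion's RSW argument goes through on the matching lattice --- for decreasing events, and started from easy-direction crossings --- in this dependent setting where independence of far-apart regions is only approximate, supplied by~\eqref{eq:correlation_decay_radius}; the duality identity, the circuit construction and the multiscale bookkeeping are routine.
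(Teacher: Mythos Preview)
Your overall strategy --- pass to the closed $*$-model by duality, run RSW there, produce closed $*$-circuits, and contradict percolation --- is the classical route and is \emph{different} from what the paper does. The paper never touches the matching lattice: it first shows $\inf_n\PP_{p,t}[H(n,n)]>0$ from $\PP_{p,t}[0\leftrightarrow\infty]>0$, applies Theorem~\ref{t:rsw} (for the \emph{open} model only) to get open circuits, then uses the almost-sure existence of the infinite cluster to force annulus crossings and deduce $\lim_n\PP_{p,t}[H(n,n)]=1$. The step from squares to long rectangles is done via a direct construction with the events $A_n^{k,i}$ and $\bar A_n^k$, not by duality.

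There is, however, a genuine gap in your argument, exactly where you suspected. From $\PP_{p,t}[H(3n,n)]\le 1-\varepsilon$ you obtain a uniform lower bound on closed $*$-crossings of $[1,3n]\times[1,n]$ in the \emph{short} direction. Tassion's RSW, as stated in Theorem~\ref{t:rsw} (and in~\cite{tassion}), takes as input a uniform lower bound on \emph{square} crossings; an easy-direction bound is strictly weaker and does not feed into the argument. In particular, there is no monotonicity letting you pass from a top--bottom $*$-crossing of $[1,3n]\times[1,n]$ to a $*$-crossing of an $n\times n$ square, and combining several easy-direction $*$-crossings into a square one by FKG is precisely the RSW problem you are trying to solve. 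The reduction and the decoupling over sparse scales are fine; it is this seeding step that fails. A workable repair is to change the target of the contradiction: argue instead that $\sup_n\PP_{p,t}[H(n,n)]=1$, since its failure gives $\inf_n\PP_{p,t}[H^*(n,n)]>0$ directly by square duality, after which your RSW/circuit/decoupling chain applies verbatim --- but then you still owe an argument (not supplied by Lemma~\ref{lemma:asymptotic_crossing}) to upgrade $\sup_n\PP_{p,t}[H(n,n)]=1$ to $\sup_n\PP_{p,t}[H(3n,n)]=1$, which is essentially what the paper's $A_n^{k,i}$ construction accomplishes. You also need RSW on the matching lattice, which the paper does not state; this is plausible but is an additional check beyond Theorem~\ref{t:rsw}.
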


\par The proof of this theorem is split into several lemmas. The first lemma considers crossings of squares.
\begin{lemma}
Assume that $\PP_{p,t}[\eta \text{ percolates}]>0$. Then
\begin{equation}
\inf_{n} \PP_{p,t}[H(n,n)] >0.
\end{equation}
\end{lemma}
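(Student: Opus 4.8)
The plan is to derive the statement from Gandolfi's theorem (Theorem~\ref{t:gandolfi}) together with planar duality, arguing by contradiction. First I would check that $\PP_{p,t}$ satisfies the hypotheses of Theorem~\ref{t:gandolfi}: translation and reflection invariance and ergodicity are recorded in Section~\ref{sec:model}, positive association is Proposition~\ref{prop:fkg}, and percolation has positive probability by assumption. This gives, almost surely, a unique infinite open cluster and — above all — an open circuit surrounding every finite box; in particular, for each $\epsilon>0$ there is $N=N(\epsilon)$ with
\[
\PP_{p,t}\bigl[\text{there is an open circuit around } B_1 \text{ contained in } B_N\bigr]\ge 1-\epsilon .
\]

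Next I would observe that $\PP_{p,t}[H(n,n)]>0$ for every fixed $n$ — for instance because $\{\eta_t\equiv 1\text{ on }[1,n]^2\}$ has positive probability, since that configuration is determined by $\eta_0$ on the almost surely finite set $\bigcup_{x\in[1,n]^2}C_t(x)$, on which $\eta_0\equiv 1$ has positive probability — so that, if $\inf_n\PP_{p,t}[H(n,n)]=0$, there must be a sequence $n_k\to\infty$ with $\PP_{p,t}[H(n_k,n_k)]\to 0$. It is this scenario that I would rule out. By planar duality for site percolation on $\ZZ^2$, $H(n,n)^c$ is exactly the event that $[1,n]^2$ is crossed from top to bottom by a $*$-connected path of closed sites; hence $\PP_{p,t}[\text{closed }*\text{-TB crossing of }[1,n_k]^2]\to 1$, and by the reflection and rotation invariance of $\PP_{p,t}$ the same holds with left--right crossings. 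Since $\PP[A\cap B]\ge\PP[A]+\PP[B]-1$, it follows that with probability tending to one the box $[1,n_k]^2$ carries $*$-connected closed crossings in both the horizontal and the vertical direction.

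The last and hardest step is to upgrade this into the almost sure existence of an infinite $*$-connected cluster of closed sites. The closed $*$-model also enjoys the symmetries, the FKG inequality (which holds for decreasing events as well) and the decoupling estimate~\eqref{eq:decoupling}, so the Russo--Seymour--Welsh theory of Theorem~\ref{t:rsw} is available for it; combining it with the standard FKG construction of a circuit out of hard crossings of the four arms of an annulus, and with a multiscale argument powered by~\eqref{eq:decoupling} in the spirit of Lemma~\ref{lemma:asymptotic_crossing}, one would obtain almost surely closed $*$-circuits around $B_1$ at a geometric sequence of scales, hence an infinite closed $*$-cluster. This contradicts the first paragraph: an infinite $*$-connected path of closed sites that starts inside $B_1$ must eventually leave every box, yet it can never meet the open nearest-neighbour circuit surrounding $B_1$, since a $*$-connected path cannot cross a nearest-neighbour cycle without meeting one of its vertices. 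Therefore $\PP_{p,t}[H(n_k,n_k)]\not\to 0$, and $\inf_n\PP_{p,t}[H(n,n)]>0$. The hard part will be exactly this last upgrade: the information at hand only makes closed square crossings likely along the sparse subsequence $(n_k)$, so some care is needed to first extract from $\inf_n\PP_{p,t}[H(n,n)]=0$ that closed square-crossing probabilities are bounded below along scales going to infinity, then to run RSW to obtain closed crossings of every aspect ratio bounded below at the matching scales, and only then to glue the nested closed circuits across scales using~\eqref{eq:decoupling}.
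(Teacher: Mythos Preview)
Your approach is workable in spirit but is dramatically heavier than what the paper does, and the step you yourself flag as ``the hard part'' is a real obstacle, not just a technicality.

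The paper's proof is three lines. From $\PP_{p,t}[\eta\text{ percolates}]>0$ and translation invariance one gets $\PP_{p,t}[(0,0)\leftrightarrow\infty]>0$. Any infinite open path from the origin must exit $B_n=[-n,n]^2$ through one of its four sides, so by rotation invariance $\PP_{p,t}[A_n^{\text{right}}]\ge\tfrac14\,\PP_{p,t}[(0,0)\leftrightarrow\infty]$, where $A_n^{\text{right}}$ is the event of an open path in $B_n$ from the origin to $\{n\}\times[-n,n]$. Gluing a right arm with a reflected left arm by FKG yields
\[
\PP_{p,t}[H(2n+1,2n+1)]\ \ge\ \PP_{p,t}[A_n^{\text{right}}]^2\ \ge\ \tfrac{1}{16}\,\PP_{p,t}[(0,0)\leftrightarrow\infty]^2,
\]
uniformly in $n$. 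No duality, no Gandolfi, no RSW, no multiscale.

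Your route --- contradiction via an infinite closed $*$-cluster that would block the Gandolfi circuits --- is the right heuristic picture, but the upgrade you need is not available from the tools you invoke. Theorem~\ref{t:rsw} (Tassion's RSW) is stated under the hypothesis $\inf_n\PP[H(n,n)]>0$, a bound at \emph{all} scales; Tassion's argument genuinely compares different scales, and nothing in the paper gives an RSW conclusion from a lower bound along an arbitrary sparse subsequence $(n_k)$. Likewise, the bootstrap of Lemma~\ref{lemma:asymptotic_crossing} needs as trigger a \emph{hard} crossing (aspect ratio $3$) being likely at some scale --- which is precisely what RSW was supposed to supply --- and then propagates along the geometric sequence $3^kn_0$, not along $(n_k)$. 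From $\PP_{p,t}[H(n_k,n_k)]\to 0$ you only get closed $*$-\emph{square} crossings likely along $(n_k)$; you have no mechanism to convert this into closed $*$-hard crossings (needed to build circuits) nor into control at the geometric scales required to nest circuits and glue them via~\eqref{eq:decoupling}. Closing this gap would essentially amount to redeveloping an RSW/sharp-threshold package for the dual model from subsequential input, which is far more than the lemma warrants.
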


\begin{proof}
Notice initially that
\begin{equation}
\PP_{p,t}\left[(0,0)\longleftrightarrow \infty \right]>0.
\end{equation}
Indeed, assume this was not the case and that the probability above equals zero. By translation invariance, the same would apply for all points $(x,y) \in \ZZ^{2}$. Union bound would give
\begin{equation}
\PP_{p,t}[\eta \text{ percolates}] \leq \sum_{(x,y) \in \ZZ^{2}} \PP_{p,t}\left[(x,y)\longleftrightarrow \infty \right] =0,
\end{equation}
contradicting our hypothesis that this probability is positive.

For $n \in \NN$, we consider the event
\begin{equation}
A_{n}^{right}= \left[(0,0)\overset{B_{n}}{\longleftrightarrow} \{n\} \times [-n,n]\right],
\end{equation}
where $B_{n}=[-n,n] \times [-n,n]$. By rotation invariance, we have
\begin{equation}
\PP_{p,t}\left[(0,0) \longleftrightarrow \infty \right] \leq 4\PP_{p,t}[A_{n}^{right}].
\end{equation}
Now, FKG inequality and symmetry imply that
\begin{equation}
\PP_{p,t}[H(2n+1,2n+1)] \geq \PP_{p,t}[A_{n}^{right}]^{2} \geq \frac{1}{16}\PP_{p,t}\left[(0,0)\longleftrightarrow \infty \right]^{2},
\end{equation}
concluding the lemma.
\end{proof}

\par For $m \leq n$, we consider events of the form
\begin{equation}
\cir(m,n) = \left\{\begin{array}{cl}
\text{there exists an open circuit} \\ \text{ surrounding } [-m,m]^{2} \\ \text{and contained in } [-n,n]^{2}
\end{array}\right\},
\end{equation}

\newconstant{c:cir_1}
\newconstant{c:cir_2}
\begin{prop}\label{prop:circ}
Assume $\inf_{n}\PP_{p,t}[H(n,n)]>0$. There exist positive constants $\useconstant{c:cir_1}$ and $\useconstant{c:cir_2}$ such that, for any $m \leq \frac{n}{9}$,
\begin{equation}
\PP_{p,t}[\cir(m,n)^{c}] \leq \left(\frac{n}{m}\right)^{-\useconstant{c:cir_1}}+\useconstant{c:cir_2}e^{-\useconstant{c:cir_2}m}.
\end{equation}
\end{prop}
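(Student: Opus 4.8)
The plan is to upgrade the hypothesis $\inf_n\PP_{p,t}[H(n,n)]>0$ into a uniform-in-scale lower bound on the probability of an open circuit in an annulus of fixed proportion, and then to run a multiscale argument over $\asymp\log(n/m)$ well-separated annuli, using the decoupling inequality~\eqref{eq:decoupling} to make the annuli approximately independent.

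First I would record a circuit estimate for a single annulus. By Theorem~\ref{t:rsw} applied with $\lambda=3$, the quantity $\delta_0:=\inf_n\PP_{p,t}[H(3n,n)]$ is positive. For $r\ge1$, the annulus $[-3r,3r]^2\setminus(-r,r)^2$ is covered by the four rectangles $[r,3r]\times[-3r,3r]$, $[-3r,-r]\times[-3r,3r]$, $[-3r,3r]\times[r,3r]$ and $[-3r,3r]\times[-3r,-r]$, each a translate and rotation of a rectangle of aspect ratio $3{:}1$; by translation and rotation invariance of $\PP_{p,t}$ each is crossed in its hard direction with probability at least $\delta_0$, and these are increasing events. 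In the square where two consecutive rectangles overlap, one hard crossing appears as a left--right crossing and the other as a top--bottom crossing, so the two crossings intersect; hence if all four hard crossings occur their union contains an open circuit surrounding $[-r,r]^2$ and contained in $[-3r,3r]^2$. Proposition~\ref{prop:fkg} then gives $\PP_{p,t}[\cir(r,3r)]\ge\delta_0^4=:\delta>0$ for every $r\ge1$.

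Next I would set $a_\ell=9^\ell m$ for $\ell=0,\dots,K-1$, with $K=\lfloor\log_9(n/(3m))\rfloor+1$ chosen so that $3a_{K-1}\le n$; since $m\le n/9$ one has $K\ge1$ and $K\ge\log_9(n/(3m))$. Every event $\cir(a_\ell,3a_\ell)$ implies $\cir(m,n)$ (the circuit surrounds $[-a_\ell,a_\ell]^2\supseteq[-m,m]^2$ and sits in $[-3a_\ell,3a_\ell]^2\subseteq[-n,n]^2$), so $\cir(m,n)^c\subseteq\bigcap_{\ell=0}^{K-1}\cir(a_\ell,3a_\ell)^c$. The annular regions $\Lambda_\ell:=[-3a_\ell,3a_\ell]^2\setminus(-a_\ell,a_\ell)^2$ are pairwise disjoint, and the distance from $\Lambda_\ell$ to the box $[-3a_{\ell-1},3a_{\ell-1}]^2$, which contains $\bigcup_{i<\ell}\Lambda_i$, equals $a_\ell-3a_{\ell-1}=6\cdot9^{\ell-1}m$. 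Peeling the outermost annulus at each step and applying~\eqref{eq:decoupling} (after splitting each $\Lambda_\ell$ into $O(1)$ boxes of side $\asymp a_\ell$) would give
\[
\PP_{p,t}\Big[\bigcap_{\ell=0}^{K-1}\cir(a_\ell,3a_\ell)^c\Big]\ \le\ \prod_{\ell=0}^{K-1}\PP_{p,t}[\cir(a_\ell,3a_\ell)^c]\ +\ \sum_{\ell=1}^{K-1}\mathrm{err}_\ell\ \le\ (1-\delta)^K+\sum_{\ell\ge1}\mathrm{err}_\ell,
\]
where $\mathrm{err}_\ell\le C\,\useconstant{c:decoupling}\,a_\ell\,a_{\ell-1}\,e^{-3\cdot9^{\ell-1}m}\le C'\,(9^{\ell-1}m)^2e^{-3\cdot9^{\ell-1}m}$. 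Because the peeling goes from the outside inward, the inner region at step $\ell$ has boundary of size $\asymp a_{\ell-1}$ and not $\asymp n$, so the error series is geometrically dominated by its innermost ($\ell=1$) term, yielding $\sum_{\ell\ge1}\mathrm{err}_\ell\le C''m^2e^{-3m}$. Since $n/m\ge9$ and $K\ge\log_9(n/(3m))$, one has $(1-\delta)^K\le(n/(3m))^{-\log_9(1/(1-\delta))}\le(n/m)^{-\useconstant{c:cir_1}}$ with $\useconstant{c:cir_1}=\tfrac12\log_9\tfrac{1}{1-\delta}$, and $C''m^2e^{-3m}\le\useconstant{c:cir_2}e^{-\useconstant{c:cir_2}m}$ after adjusting constants; combining these proves the proposition.

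The hard part will be the bookkeeping of the decoupling error: the annuli must be kept far enough apart for~\eqref{eq:decoupling} to be useful (this is why the scales grow by a factor $9$ rather than $3$, which only costs a factor $2$ in $\useconstant{c:cir_1}$), and the global scale $n$ must be kept out of the error estimate. Peeling from the outside in --- or, equivalently, bounding the error at each step directly by $\sum\PP_{p,t}[C_t(x)\cap C_t(y)\neq\emptyset]$ and invoking~\eqref{eq:disjoing_cone_of_light}, which is valid for all pairs of sites --- achieves both, and makes $\sum_\ell\mathrm{err}_\ell$ converge to a quantity of order $m^2e^{-3m}$ that does not depend on $n$. The remaining steps --- enlarging $m$ to $\max(m,\lceil e^2t\rceil)$ when $m$ is too small for~\eqref{eq:decoupling} to apply directly, and absorbing the (possibly $t$-dependent) prefactors into $\useconstant{c:cir_1}$ and $\useconstant{c:cir_2}$ --- are routine.
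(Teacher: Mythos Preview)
Your proposal is correct and follows essentially the same route as the paper's proof: invoke Theorem~\ref{t:rsw} and FKG to get a uniform lower bound on $\PP_{p,t}[\cir(r,3r)]$, nest $\asymp\log(n/m)$ circuit events inside $[-n,n]^2$, and peel them apart via the correlation-decay estimate. The only differences are cosmetic: the paper uses annuli at scales $3^j m$ and drops every other $j$ to create separation (hence the exponent $\lfloor|J|/2\rfloor$), which is exactly your factor-$9$ spacing in disguise; the paper peels from the inside out using~\eqref{eq:correlation_decay_radius} rather than from the outside in via~\eqref{eq:decoupling}, but since the error in~\eqref{eq:correlation_decay_radius} depends only on the inner box, $n$ never enters there either, so your emphasis on peeling direction is unnecessary --- both orderings give an error series dominated by its innermost term.
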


\begin{proof}
Let $J=\{j \geq 0: m \leq 3^{j}m \leq 3^{j+1}m \leq n\}$ and observe that $|J| \geq \lfloor \frac{1}{\log3}\log\left(\frac{n}{m}\right) \rfloor \geq 2$.

By Theorem~\ref{t:rsw}, we have
\begin{equation}
\inf_{n}\PP_{p,t}[H(3n,n)]=c>0,
\end{equation}
and this implies, as a simple consequence of FKG inequality,
\begin{equation}
\PP_{p,t}[\cir(3^{j}m, 3^{j+1}m)] \geq c^{4}.
\end{equation}

Observe now that
\begin{equation}
\cir(m,n)^{c} \subseteq \bigcap_{j \in J}\cir(3^{j}m, 3^{j+1}m)^{c}.
\end{equation}
Consider the events $A=\cir(m, 3m)$ and $B=\cap_{j \in J, j \geq 2} \cir(3^{j}m, 3^{j+1}m)$. While $A$ depends on the configuration only inside $[-3m,3m]^{2}$, $B$ is determined by the configuration outside $[-9m,9m]^{2}$. We can then apply~\eqref{eq:correlation_decay_radius} to conclude that
\begin{equation}
\begin{split}
\PP_{p,t}[\cir(m,n)^{c}] & \leq \PP_{p,t}\left[\bigcap_{j \in J}\cir(3^{j}m, 3^{j+1}m)^{c}\right] \\
& \leq \PP_{p,t}[\cir(m, 3m)]\PP_{p,t}[B]+\useconstant{c:cov_circ}m^{2}e^{-\sfrac{m}{2}}\\
& \leq (1-c^{4})\PP_{p,t}[B]+\useconstant{c:cov_circ}m^{2}e^{-\sfrac{m}{2}}.
\end{split}
\end{equation}

We now proceed inductively to bound the probability of $B$ in the same way. Finally, we obtain
\begin{equation}
\begin{split}
\PP_{p,t}[\cir(m,n)^{c}] & \leq (1-c^{4})^{\left\lfloor\frac{|J|}{2}\right\rfloor}+\left(\sum_{j=0}^{\infty}(1-c^{4})^{j}\right)\useconstant{c:cov_circ}m^{2}e^{-\sfrac{m}{2}} \\
& \leq \left(\frac{n}{m}\right)^{-\useconstant{c:cir_1}}+\useconstant{c:cir_2}e^{-\useconstant{c:cir_2}m},
\end{split}
\end{equation}
by a suitable choice of constants. This concludes the proof.
\end{proof}

\newconstant{c:cir_3}
\begin{cor}\label{cor:circ}
Assume $\inf_{n}\PP_{p,t}[H(n,n)]>0$. There exist positive constants $\useconstant{c:cir_3}$ and $\gamma$ such that, for all $n \geq 0$
\begin{equation}
\PP_{p,t}[\cir(\sqrt{n},n)^{c}] \leq \useconstant{c:cir_3}n^{-\gamma}.
\end{equation}
\end{cor}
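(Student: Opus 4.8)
This corollary is an immediate consequence of Proposition~\ref{prop:circ}. The plan is to substitute $m = \sqrt{n}$ into the bound
\begin{equation}
\PP_{p,t}[\cir(m,n)^{c}] \leq \left(\frac{n}{m}\right)^{-\useconstant{c:cir_1}}+\useconstant{c:cir_2}e^{-\useconstant{c:cir_2}m},
\end{equation}
which is valid whenever $m \leq \frac{n}{9}$. Since $\sqrt{n} \leq \frac{n}{9}$ holds for all $n \geq 81$, the proposition applies directly in that range, and for the finitely many values $n < 81$ we can absorb everything into the constant $\useconstant{c:cir_3}$ by taking it large enough (recall $\PP_{p,t}[\cir(\sqrt{n},n)^{c}] \leq 1$ trivially).

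For $n \geq 81$, the first term becomes $(n/\sqrt{n})^{-\useconstant{c:cir_1}} = n^{-\useconstant{c:cir_1}/2}$, which is already polynomial. For the second term, I would use that the exponential $e^{-\useconstant{c:cir_2}\sqrt{n}}$ decays faster than any polynomial: there exists a constant (depending on $\useconstant{c:cir_2}$) such that $\useconstant{c:cir_2}e^{-\useconstant{c:cir_2}\sqrt{n}} \leq \useconstant{c:cir_3}' n^{-\useconstant{c:cir_1}/2}$ for all $n \geq 1$. Adding the two contributions and setting $\gamma = \useconstant{c:cir_1}/2$ gives a bound of the form $\useconstant{c:cir_3}n^{-\gamma}$ for all $n$, after adjusting the constant to cover the range $n < 81$ as noted above.

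There is essentially no obstacle here; the only minor point to be careful about is the elementary estimate comparing the stretched-exponential term with a polynomial, and the edge case of small $n$, both of which are routine. One could also note that $\gamma$ could be taken as any value strictly less than $\useconstant{c:cir_1}/2$ if one prefers a cleaner constant, but $\gamma = \useconstant{c:cir_1}/2$ works.
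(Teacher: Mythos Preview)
Your proposal is correct and is exactly the intended argument: the paper states this as a corollary without proof because it follows immediately from Proposition~\ref{prop:circ} by setting $m=\sqrt{n}$, observing that $(n/\sqrt{n})^{-\useconstant{c:cir_1}}=n^{-\useconstant{c:cir_1}/2}$ and that the stretched-exponential term is dominated by a polynomial, and then adjusting the constant to cover small $n$.
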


\par Now we prove that, if percolation occurs, the probability that a square is crossed actually converges to one as the size of the square grows. This implies Theorem~\ref{t:box_crossing} for values $\lambda \leq 1$.
\begin{lemma}
If $\PP_{p,t}[\eta \text{ percolates}]>0$, then
\begin{equation}
\lim_{n}\PP_{p,t}[H(n,n)]=1.
\end{equation}
\end{lemma}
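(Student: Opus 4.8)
The plan is to bootstrap from the estimate $\inf_n \PP_{p,t}[H(n,n)] > 0$ (which holds by the previous lemma, since percolation occurs) to the statement that $\PP_{p,t}[H(n,n)] \to 1$. The natural strategy is a one-step renormalisation: I would show that a crossing of a large square $[1,n]\times[1,n]$ can be built by gluing together crossings of constantly many sub-rectangles of side length comparable to $n$, together with a circuit in an annulus, and then iterate this to get a recursive inequality on the failure probabilities $q_n = \PP_{p,t}[H(n,n)^c]$ that forces $q_n \to 0$.

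Concretely, first I would use Theorem~\ref{t:rsw} to upgrade $\inf_n \PP_{p,t}[H(n,n)] > 0$ to $\inf_n \PP_{p,t}[H(\lambda n, n)] > 0$ for the finitely many aspect ratios needed below (e.g.\ $\lambda = 3$), and hence, via Proposition~\ref{prop:circ} or Corollary~\ref{cor:circ}, to the fact that $\PP_{p,t}[\cir(\sqrt{n}, n)^c] \to 0$. Next, the geometric observation: if $H(n/3, n/3)$-type crossings hold in a bounded number of translated copies of a rectangle tiling the square $[1,n]^2$ — arranged in overlapping fashion exactly as in the proof of Lemma~\ref{lemma:asymptotic_crossing}, using the rectangles from~\eqref{eq:rectangles} at the appropriate scale — then their concatenation yields a crossing of $[1,n]^2$ in the horizontal direction. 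Therefore $H(n,n)^c$ forces at least one of boundedly many sub-crossings to fail. I would then split these sub-rectangles into two spatially separated families (upper and lower, as in Lemma~\ref{lemma:asymptotic_crossing}, separated by distance of order $n$), apply the decoupling estimate~\eqref{eq:decoupling}, and obtain a bound of the form
\begin{equation}
q_n \leq C\big( q_{n/3}^{\,2} + n^2 e^{-n/2} \big),
\end{equation}
valid for $n \geq 3e^2 t$, with $C$ an absolute constant coming from the number of rectangles. The argument is essentially the one already carried out in Lemma~\ref{lemma:asymptotic_crossing}, but started from the information that $q_n$ is bounded away from $1$ for all $n$ rather than that $\sup_n(1-q_n) = 1$; so I should first check that $\inf_n \PP_{p,t}[H(n,n)] > 0$ together with the RSW and circuit estimates actually forces $q_{n_0} < (2C)^{-1}$ for some large $n_0$ — this is where the circuit probabilities tending to $1$ (Corollary~\ref{cor:circ}) do the work, since a circuit in a large annulus plus a crossing of the inner square (which has probability bounded below, uniformly, by RSW-FKG) already makes the crossing probability of the big square close to one. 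Once $q_{n_0}$ is below the threshold, the recursion above yields $q_{3^k n_0} \to 0$ geometrically, and the usual interpolation (a crossing of $[1,n]^2$ for $3^k n_0 \le n < 3^{k+1} n_0$ follows from boundedly many crossings at scale $3^k n_0$) gives $q_n \to 0$ along all $n$.

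The main obstacle is the \emph{initialisation} of the recursion: producing a single scale $n_0$ at which $q_{n_0}$ is below the absolute threshold $(2C)^{-1}$. A priori $\inf_n \PP_{p,t}[H(n,n)]$ could be some small positive number $c$, far below $1 - (2C)^{-1}$, so one cannot simply take $n_0$ large and quote uniform positivity. The resolution is to combine a crossing of a moderate inner square (probability $\ge c$ uniformly, pushed up by FKG applied to several such crossings, or directly obtained from RSW at a convenient aspect ratio) with a circuit in a surrounding annulus of ratio $\sqrt{n}$ to $n$ (probability $\to 1$ by Corollary~\ref{cor:circ}): any crossing of the inner square connects to the circuit, which then reaches both the left and right sides of $[1,n]^2$, yielding a horizontal crossing. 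FKG makes the intersection probability at least the product, so $1 - q_n \ge c \cdot \PP_{p,t}[\cir(\sqrt n, n)] \to $ something — but this only gives $1 - q_n \ge c$, not close to $1$. To actually beat the threshold one iterates once more: use that $\PP_{p,t}[\cir(m, 3m)]$ is uniformly bounded below (by $c^4$, as in the proof of Proposition~\ref{prop:circ}) so that the event "all of the $\Theta(\log(n/\sqrt n))$ concentric annuli carry circuits" has probability $1 - O((n/\sqrt n)^{-\useconstant{c:cir_1}})$, and any crossing of the innermost region is then carried all the way across — so in fact $\PP_{p,t}[H(n,n)] \ge \PP_{p,t}[\cir(\sqrt n, n)] - o(1) \cdot \PP_{p,t}[\text{inner crossing}]^{-1}$, forcing $q_{n_0}$ below any prescribed constant for $n_0$ large. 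With that in hand the recursive step and interpolation are routine.
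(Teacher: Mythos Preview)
Your proposal has a genuine gap, and it is precisely at the step you flag as the ``main obstacle'': the initialisation of the recursion. The geometric claim you use to initialise is false. You write that ``any crossing of the inner square connects to the circuit, which then reaches both the left and right sides of $[1,n]^2$'', but a circuit in the annulus $B_n\setminus B_{\sqrt n}$ is, by definition, entirely contained in $B_n$ and need not touch $\partial B_n$ at all --- it is a closed loop around the inner box, not a path to the outer boundary. Likewise, a horizontal crossing of the inner square lives inside $B_{\sqrt n}$ and need not even meet the circuit. So the combination ``inner crossing $+$ surrounding circuit'' produces a connected open set floating somewhere inside $B_n$, not a left--right crossing of $B_n$. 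Your attempted iteration in the last paragraph repeats the same error (``any crossing of the innermost region is then carried all the way across''). Consequently you never get $q_{n_0}$ below the threshold $(2C)^{-1}$, and the recursion, which is fine in itself, never starts. From the bare information $\inf_n \PP_{p,t}[H(n,n)]>0$ and RSW/circuit estimates alone, one cannot bootstrap to $q_n\to 0$: those hypotheses are consistent with the critical regime, where crossing probabilities stay bounded away from both $0$ and $1$.

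What is missing is one further use of the hypothesis that percolation actually occurs, beyond the previous lemma. The paper exploits the infinite cluster directly to manufacture \emph{arm events}: since $\PP_{p,t}[\eta\text{ percolates}]=1$, with probability tending to $1$ some vertex of $B_{\sqrt n}$ lies in the infinite cluster, hence there is an open path crossing the annulus $B_n\setminus B_{\sqrt n}$ from the inner to the outer boundary. By FKG and rotational symmetry one deduces that the probability of an arm reaching specifically the right side $\{n\}\times[-n,n]$ also tends to $1$. A left arm, a right arm, and a circuit in $\cir(\sqrt n,n)$ together do force a horizontal crossing of $B_n$; FKG and Corollary~\ref{cor:circ} then give $\PP_{p,t}[H(2n+1,2n+1)]\to 1$. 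Once you have this arm-event input, no recursion is needed at all.
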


\begin{proof}
Let $B_{n}=[-n,n]^{2}$ and denote by $A(n)$ the event that the annulus $B_{n} \setminus B_{\lfloor\sqrt{n}\rfloor}$ has an open crossing connecting the inner boundary to the outer boundary.

The event $[\eta \text{ percolates}]$ is translation invariant and, by hypothesis, it has positive probability. This implies that $\PP_{p,t}[\eta \text{ percolates}]=1$, and we obtain $\lim_{n} \PP_{p,t}[A(n)^{c}]=0$, since the probability that there exists some vertex inside $B_{\lfloor\sqrt{n}\rfloor}$ that belongs to the infinite cluster converges to one as $n$ grows, and this last event is contained in $A(n)$.

Define $A^{right}(n)$ as the event where the open crossing contained in the annulus $B_{n} \setminus B_{\lfloor\sqrt{n}\rfloor}$ connects the inner boundary to the right outer boundary $\{n\} \times [-n,n]$. By FKG and rotation invariance, we get
\begin{equation}
\PP_{p,t}[A(n)^{c}] \geq \PP_{p,t}[A^{right}(n)^{c}]^{4},
\end{equation}
so that 
\begin{equation}\label{eq:limit_1}
\lim_{n}\PP_{p,t}[A^{right}(n)]=1,
\end{equation}
since $\lim_{n}\PP_{p,t}[A(n)^{c}]=0$.

Now, in order to obtain a crossing connecting the left boundary to the right boundary of $[-n,n]^{2}$, it is enough that $A^{right}(n)$, $A^{left}(n)$ (defined analogously, but connecting to the left outer boundary) and $\cir(\sqrt{n}, n)$ hold. By FKG inequality, we obtain
\begin{equation}
\PP_{p,t}[H(2n+1, 2n+1)] \geq \PP_{p,t}[A^{right}(n)]^{2}\PP_{p,t}[\cir(\sqrt{n}, n)].
\end{equation}
Finally, Corollary~\ref{cor:circ} and~\eqref{eq:limit_1} imply that the right-hand side of the equation above converges to one as $n$ grows, concluding the proof.
\end{proof}

\par We are now ready to conclude the proof of Theorem~\ref{t:box_crossing}.

\begin{proof}[Proof of Theorem~\ref{t:box_crossing}.]
Fix $k \in \NN$ even and, for $n \geq k$, partition the interval $[1,n]$ into $k$ subintervals $I_{n}^{k,i}=(i\frac{n}{k}, (i+1)\frac{n}{k}]$, for $i=0, \dots, k-1$. Here we ignore divisibility issues and assume that all the intervals $I_{n}^{k,i}$ have the same size.

Define the events
\begin{equation}
A_{n}^{k,i}=\left[\{1\} \times [1,n] \overset{R_{n,n}}{\longleftrightarrow} \{n\} \times I_{n}^{k,i}\right],
\end{equation}
where $R_{n,n}=[1,n]^{2}$, see Figure~\ref{fig:events}. We claim that $\lim_{n}\PP_{p,t}[A_{n}^{k,\sfrac{k}{2}}]=1$.

To prove this, first notice that, by FKG inequality,
\begin{equation}
\prod_{i=0}^{k-1}(1-\PP_{p,t}[A_{n}^{k,i}]) = \prod_{i=0}^{k-1}\PP_{p,t}[(A_{n}^{k,i})^{c}] \leq \PP_{p,t}[H(n,n)^{c}] \to 0.
\end{equation}
Hence,
\begin{equation}
\lim_{n} \left(\max_{i}\PP_{p,t}[A_{n}^{k,i}]\right)=1.
\end{equation}
For $k=2$, we have $\PP_{p,t}[A_{n}^{2,0}]= \PP_{p,t}[A_{n}^{2,1}]$ and the limit above is reduced to $\lim_{n} \PP_{p,t}[A_{n}^{2,1}]=1$.

Set
\begin{equation}
\bar{A}_{n}^{k}=\left[\{1\} \times [1,2n] \overset{R_{n,2n}}{\longleftrightarrow} \{n\} \times I_{n}^{k,\sfrac{k}{2}}\right],
\end{equation}
where $R_{n,2n}=[1,n] \times [1,2n]$ (see Figure~\ref{fig:events}). Let us now verify that, for each $k \geq 2$, $\lim_{n}\PP_{p,t}[\bar{A}_{n}^{k}]=1$.

\begin{figure}\label{fig:events}
\begin{center}
\begin{tikzpicture}[scale=0.6]

\node[below] at (1,-1.05){$n$};
\node[left] at (-0.05,0){$n$};

\draw[thick] (0,-1) rectangle (2,1);
\draw[thick](1.9,0.25)--(2.1,0.25);
\draw[thick](1.9,0)--(2.1,0);

\draw[thick] (0, 0) .. controls (0.7, -1) and (1, 1) .. (2, 0.13);

\node[below] at (6,-1.05){$n$};
\node[left] at (4.95,1){$2n$};

\draw[thick] (5,-1) rectangle (7,3);
\draw[thick](6.9,0.25)--(7.1,0.25);
\draw[thick](6.9,0)--(7.1,0);

\draw[thick] (5, 1.5) .. controls (5.7, -1.5) and (6, 1) .. (7, 0.13);
\end{tikzpicture}
\caption{The events $A_{n}^{k, \sfrac{k}{2}}$ and $\bar{A}^{k}_{n}$.}
\end{center}
\end{figure}
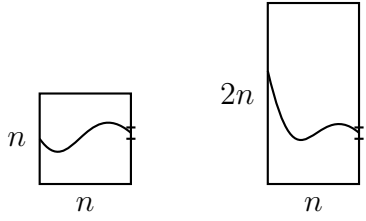

Let $A_{n}^{k,i}(x,y)$ denote the event $A_{n}^{k,i}$ translated by the vector $(x,y)$, i.e.,
\begin{equation}
A_{n}^{k,i}=\left[\{x+1\} \times (y+[1,n]) \overset{(x,y)+R_{n,n}}{\longleftrightarrow} \{x+n\} \times (y+I_{n}^{k,i})\right],
\end{equation}
then
\begin{equation}\label{eq:inclusion}
\bigcup_{i \leq \frac{k}{2}-1} A_{n}^{k,i}\left(0,\left(\frac{k}{2}-i\right)\frac{n}{k}\right) \subseteq \bar{A}_{n}^{k}.
\end{equation}
See Figure~\ref{fig:inclusion_intersection} for an example of the inclusion above in the case $i=0$.

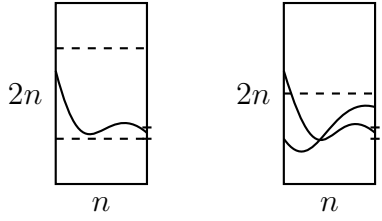
\begin{figure}\label{fig:inclusion_intersection}
\begin{center}
\begin{tikzpicture}[scale=0.6]

\node[below] at (1,-1.05){$n$};
\node[left] at (-0.05,1){$2n$};

\draw[thick, dashed] (0,2) -- (2,2);
\draw[thick, dashed] (0,0) -- (2,0);
\draw[thick](1.9,0.25)--(2.1,0.25);
\draw[thick](1.9,0)--(2.1,0);
\draw[thick] (0,-1) rectangle (2,3);

\draw[thick] (0, 1.5) .. controls (0.7, -1.2) and (1, 1) .. (2, 0.13);

\node[below] at (6,-1.05){$n$};
\node[left] at (4.95,1){$2n$};

\draw[thick, dashed] (5,1) -- (7,1);
\draw[thick](6.9,0.25)--(7.1,0.25);
\draw[thick](6.9,0)--(7.1,0);
\draw[thick] (5,-1) rectangle (7,3);

\draw[thick] (5, 0) .. controls (5.7, -1) and (6, 1) .. (7, 0.7);
\draw[thick] (5, 1.5) .. controls (5.9, -1.5) and (6, 1) .. (7, 0.13);

\end{tikzpicture}
\caption{On the left, the inclusion $A_{n}^{k,0}\left(0,\frac{n}{2}\right) \subseteq \bar{A}_{n}^{k}$, and, on the right, the intersection $\bar{A}_{n}^{k} \cap A_{n}^{2,1}$.}
\end{center}
\end{figure}

The inclusion~\eqref{eq:inclusion} implies
\begin{equation}
\begin{split}
\lim_{n}\PP_{p,t}[\bar{A}_{n}^{k}] & \geq \lim_{n}\PP_{p,t}\left[\bigcup_{i \leq \frac{k}{2}-1} A_{n}^{k,i}\left(0,\left(\frac{k}{2}-i\right)\frac{n}{k}\right)\right] \\
& \geq \lim_{n} \max_{i \leq \sfrac{k}{2}-1}\PP_{p,t}[A_{n}^{k,i}]=1.
\end{split}
\end{equation}

Now, simply observe that, using a concatenation argument (see Figure~\ref{fig:inclusion_intersection}), we can conclude that $A_{n}^{k,\sfrac{k}{2}} \supseteq \bar{A}_{n}^{k} \cap A_{n}^{2,1}$. This implies the claim, since the probabilities of both events in the intersection converge to one.

To conclude the theorem it is enough to consider $\lambda =2$. Fix $\epsilon>0$, and choose $k \in 2\NN$ large such that
\begin{equation}
k^{-\useconstant{c:cir_1}} \leq \frac{\epsilon}{2},
\end{equation}
where $\useconstant{c:cir_1}$ is given by Proposition~\ref{prop:circ}. Choose $n_{0} \geq 9k$ such that, for all $n \geq n_{0}$,
\begin{equation}
\PP_{p,t}[A_{n}^{k,\sfrac{k}{2}}] \geq 1-\epsilon \quad \text{and} \quad 
\useconstant{c:cir_2}e^{-\useconstant{c:cir_2}\frac{n}{k}} \leq \frac{\epsilon}{2},
\end{equation}
where $\useconstant{c:cir_2}$ is given by Proposition~\ref{prop:circ}.

For this choice of $k$ and $n_{0}$, Proposition~\ref{prop:circ} implies, for $n \geq n_{0}$,
\begin{equation}
\PP_{p,t}\left[\cir\left(\frac{n}{k},n\right)^{c}\right] \leq \epsilon.
\end{equation}
Finally, by considering suitable translations and reflections of the events $\cir\left(\frac{n}{k},n\right)$ and $A_{n}^{k,\sfrac{k}{2}}$, that we represent with a drawing, we can apply FKG inequality to obtain
\begin{equation}
\begin{split}
\PP_{p,t}[H(2n,n)] & \geq \PP_{p,t} \left[ \drawing \right] \\
& \geq \PP_{p,t}[A_{n}^{k,\sfrac{k}{2}}]^{2}\PP_{p,t}\left[\cir\left(\frac{n}{k},n\right)\right] \geq (1-\epsilon)^{3},
\end{split}
\end{equation}
for any $n \geq n_{0}$. Since the choice of $\epsilon$ is arbitrary, the result follows.
\end{proof}

Our next lemma gives a sufficient condition for percolation. The proof of this criterion follows the same steps of Lemma~\ref{lemma:asymptotic_crossing}.
\begin{lemma}\label{lemma:percolation_condition}
Given $T$, there exist $\epsilon>0$ and $n_{0} \in \NN$ such that if
\begin{equation}
\PP_{p,t}[H(4n,n)] \geq 1-\epsilon,
\end{equation}
for some $t \in [0,T], p \in [0,1]$ and $n \geq n_{0}$, then
\begin{equation}
\PP_{p,t}[\eta \text{ percolates}]>0.
\end{equation}
\end{lemma}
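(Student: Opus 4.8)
The plan is to run a multiscale renormalisation argument essentially identical in structure to the proof of Lemma~\ref{lemma:asymptotic_crossing}, but now seeded only by the hypothesis $\PP_{p,t}[H(4n,n)] \geq 1-\epsilon$ and with the goal of constructing an infinite open path with positive probability rather than merely pushing the crossing probability to one. First I would set up the sequence of scales $L_0 = n$, $L_{k+1} = 2L_k$ (the factor $4$ in $H(4n,n)$ instead of the factor $3$ from Lemma~\ref{lemma:asymptotic_crossing} is precisely what gives the extra room needed to pass from long horizontal crossings to crossings of rectangles at the next scale together with the vertical ``connector'' rectangles, so that concatenation of hard crossings at scale $L_k$ forces a hard crossing at scale $L_{k+1}$). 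Define $p_k = \PP_{p,t}[H(4L_k,L_k)^c]$. As in Lemma~\ref{lemma:asymptotic_crossing}, if $H(4L_{k+1},L_{k+1})^c$ occurs then at least one rectangle in a suitable finite collection of $O(1)$ rectangles (each a translate/rotate of $R_{4L_k,L_k}$, with the vertical connectors arranged along both the top and bottom of the big rectangle) fails to be crossed in its hard direction; the minimal distance between the ``top chain'' and ``bottom chain'' of these rectangles is of order $L_k$, so the decoupling estimate~\eqref{eq:decoupling} applies as soon as $L_k \geq 3e^2 t$.

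This yields a recursion of the form
\begin{equation}
p_{k+1} \leq C_1\bigl(p_k^2 + C_2 L_k^2 e^{-L_k/2}\bigr),
\end{equation}
valid once $L_k \geq 3e^2 t$, with $C_1$ an absolute combinatorial constant (the number of pairs of rectangles in the two chains) and $C_2$ coming from~\eqref{eq:decoupling}; here I would use the Remark that the constant $\useconstant{c:decoupling}$ can be taken non-decreasing in $t$ and hence bounded on $[0,T]$, which is why the statement is uniform over $t \in [0,T]$ (and the threshold $n_0$ can be chosen depending only on $T$, not on the individual $p$ and $t$). Choosing $\epsilon$ small enough that $p_0 \leq 1/(4C_1)$ and $n_0$ large enough (depending on $T$) that $C_1^2 \cdot (\text{const}) \cdot n^3 e^{-n/2} \leq 1/2$ for all $n \geq n_0$, the same inductive computation as in Lemma~\ref{lemma:asymptotic_crossing} shows $p_k \leq (4C_1)^{-1} 2^{-k}$ for all $k$, so $\sum_k p_k < \infty$ and in particular $\sum_k \PP_{p,t}[H(4L_k,L_k)^c \text{ or the corresponding vertical crossing fails}] < \infty$.

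Finally I would assemble the infinite cluster. By Borel--Cantelli, almost surely all but finitely many scales have the hard crossing of the designated scale-$L_k$ rectangles present, and by arranging the rectangles in a standard nested ``picture-frame'' fashion (crossings at consecutive scales overlap in a region where FKG guarantees, with probability $1 - \sum p_k$ bounded away from zero, that they actually intersect), one obtains with positive probability an unbounded connected open structure, i.e. percolation; hence $(p,t) \in \mathscr{P}$. The main obstacle I anticipate is purely bookkeeping: getting the geometry of the finite rectangle collection right so that (a) hard crossings at scale $L_k$ really do concatenate into a hard crossing at scale $L_{k+1}$ with the factor-$4$ aspect ratio, (b) the top and bottom chains stay at distance $\gtrsim L_k$ so~\eqref{eq:decoupling} is applicable, and (c) the overlapping-crossings at successive scales can be glued using only FKG and a union bound over the finitely many ``bad'' events. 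None of this is conceptually new — it is the renormalisation already used in Lemma~\ref{lemma:asymptotic_crossing}, re-run with a slightly larger aspect ratio and read off at the level of an explicit path rather than a crossing probability — so I would state the recursion, cite the computation in Lemma~\ref{lemma:asymptotic_crossing} verbatim, and spend the bulk of the write-up on the final geometric assembly of the infinite path.
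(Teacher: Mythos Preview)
Your strategy is exactly the paper's: a multiscale renormalisation with two well-separated chains of scale-$L_k$ rectangles, the recursion $p_{k+1}\le C_1(p_k^2+\text{error})$ coming from~\eqref{eq:decoupling}, the inductive bound $p_k\le (4C_1)^{-1}2^{-k}$, and Borel--Cantelli to assemble an infinite open path. The paper implements this with $L_{k+1}=4L_k$, fourteen rectangles per chain, and $\epsilon=(4\cdot14^2)^{-1}$.

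The one genuine gap is your scale factor $L_{k+1}=2L_k$: with it the big rectangle $[1,4L_{k+1}]\times[1,L_{k+1}]$ has height only $2L_k$, while the horizontal scale-$k$ rectangles already have height $L_k$. Two chains whose crossings each force a crossing of the big rectangle must therefore sit at $y\in[1,L_k]$ and $y\in[L_k+1,2L_k]$, which are adjacent --- the separation is $1$, not of order $L_k$, and the decoupling step (b) you flag cannot be carried out. (Pushing the vertical connectors outside the big rectangle does not help, since then the concatenated path leaves the rectangle and no longer witnesses $H(4L_{k+1},L_{k+1})$.) The aspect ratio $4$ does buy extra room, but it is used by taking $L_{k+1}=4L_k$: then the two chains live at $y\in[1,L_k]$ and $y\in[3L_k+1,4L_k]$, separated by $2L_k$. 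Finally, no FKG is needed in the assembly: once Borel--Cantelli gives that all designated rectangles at all large $k$ are crossed, the infinite path is obtained by direct concatenation.
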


\begin{proof}
Define $\epsilon=(4\cdot 14^{2})^{-1}$, and let $n_{0} \geq 3e^{2}T$ be such that
\begin{equation}
100\useconstant{c:decoupling}n^{2}e^{-n} \leq \frac{n^{-1}}{8 \cdot 14^{4}},
\end{equation}
for all $n \geq n_{0}$, where $\useconstant{c:decoupling}=\useconstant{c:decoupling}(T)$ is the constant in~\eqref{eq:decoupling}.

Assume that $\PP_{p,t}[H(4n,n)] \geq 1-\epsilon$, for some $n \geq n_{0}$, define the sequence
\begin{equation}
L_{0}=n, \qquad \text{and} \qquad L_{k+1}=4L_{k},
\end{equation}
and consider the probabilities
\begin{equation}
p_{k}=\PP_{p,t}[H(4L_{k},L_{k})^{c}].
\end{equation}

Suppose we are in the event $H(4L_{k+1},L_{k+1})^{c}$, and consider the collection of rectangles
\begin{equation}\label{eq:rectangles_2}
\begin{split}
[2xL_{k}+1, (2x+4)L_{k}] \times [1,L_{k}], & \qquad x \in \{0,1,2,3,4,5,6,7\}, \\
[2xL_{k}+1, (2x+1)L_{k}] \times [-3L_{k}+1, L_{k}], & \qquad x \in \{1,2,3,4,5,6\}.
\end{split}
\end{equation}

If all the fourteen rectangles above are crossed in the long direction, then we can concatenate these paths and find a long crossing of $[1,4L_{k+1}] \times [1,L_{k+1}]$.

We conclude that, in $H(4L_{k},L_{k})^{c}$, at least one of the rectangles in~\eqref{eq:rectangles_2} is not crossed in the long direction. Similarly, another set of fourteen rectangles can be constructed along the upper boundary of $[1,4L_{k+1}] \times [1,L_{k+1}]$ with the same property.

The minimum distance between rectangles from the upper and lower chain is $2L_{k}$. Union bound and Equation~\eqref{eq:decoupling} imply
\begin{equation}
p_{k+1} \leq 14^{2}(p_{k}^{2}+100\useconstant{c:decoupling}L_{k}^{2}e^{-L_{k}}).
\end{equation}

By assumption, $p_{0} \leq (4\cdot 14^{2})^{-1}$. Suppose, by induction, that $p_{k} \leq (4\cdot 14^{2})^{-1}2^{-k}$ and estimate
\begin{equation}
\begin{split}
4\cdot 14^{2} \cdot2^{k+1}p_{k+1} & \leq 4\cdot 14^{4}\cdot2^{k+1}(p_{k}^{2}+100\useconstant{c:decoupling}L_{k}^{2}e^{-L_{k}}) \\
& \leq 4\cdot 14^{4}\cdot2^{k+1}\left(\frac{1}{16\cdot 14^{4}}\frac{1}{2^{2k}}+100\useconstant{c:decoupling}L_{k}^{2}e^{-L_{k}}\right) \\
& \leq 4\cdot 14^{4}\cdot2^{k+1}\left(\frac{1}{16\cdot 14^{4}}\frac{1}{2^{2k}}+\frac{L_{k}^{-1}}{8\cdot 14^{4}}\right) \\
& \leq \frac{1}{2}+2^{k+1}\frac{L_{k}^{-1}}{2} \leq 1.
\end{split}
\end{equation}
This implies $p_{k} \leq  (4\cdot 14^{2})^{-1}2^{-k}$, for all $k \geq 0$.

Let us now verify that $\PP_{p,t}$ percolates. Consider, for each $k$, the rectangles in~\eqref{eq:rectangles_2}. Since
\begin{equation}
\sum_{k}14p_{k}< \infty,
\end{equation}
by Borel-Cantelli Lemma, there exists $k_{0}$ such that, for all $k \geq k_{0}$, every rectangle is crossed  in the hard direction by an open path. Concatenating these paths allows us to construct an infinite open path and concludes the proof of the lemma.
\end{proof}

\par Our next proposition says that set $\mathscr{P}$ introduced in~\eqref{eq:set_P} is an open subset of $[0,1] \times [0, \infty)$.

\begin{prop}\label{prop:percolation_parameters}
If $\PP_{p,t}[\eta \text{ percolates}]>0$, then there exists $\delta>0$ such that
\begin{equation}
\PP_{p-\delta,t-\delta}[\eta \text{ percolates}]>0.
\end{equation}
\end{prop}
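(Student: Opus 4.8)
The plan is to derive the conclusion by combining Theorem~\ref{t:box_crossing}, Lemma~\ref{lemma:percolation_condition}, and a continuity argument for the crossing probability $\PP_{p,t}[H(4n,n)]$ in the parameters $(p,t)$. Concretely, suppose $\PP_{p,t}[\eta \text{ percolates}]>0$. By Theorem~\ref{t:box_crossing} applied with $\lambda = 4$, we have $\lim_{n}\PP_{p,t}[H(4n,n)]=1$. Let $T = t+1$ (any fixed time strictly above $t$), and let $\epsilon>0$ and $n_{0}\in\NN$ be the constants furnished by Lemma~\ref{lemma:percolation_condition} for this $T$. Pick $n\ge n_{0}$ large enough that $\PP_{p,t}[H(4n,n)] \ge 1-\epsilon/2$. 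It now suffices to show that there exists $\delta>0$, which we may also take small enough that $t-\delta \ge 0$ and $t-\delta\le T$, such that $\PP_{p-\delta,\,t-\delta}[H(4n,n)] \ge 1-\epsilon$; then Lemma~\ref{lemma:percolation_condition} (whose hypothesis requires only $t'\in[0,T]$, $p'\in[0,1]$, $n\ge n_{0}$) gives $\PP_{p-\delta,t-\delta}[\eta\text{ percolates}]>0$, as desired.

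The content of the proof is therefore the continuity claim: for a fixed finite $n$, the crossing event $H(4n,n)$ depends only on the opinions $\eta_{t}(x)$ for $x$ in the finite box $R_{4n,n}$, and I claim $(p,t)\mapsto\PP_{p,t}[H(4n,n)]$ is continuous (in fact jointly continuous) on $[0,1]\times\RR_{+}$. This I would establish through the graphical construction of Section~\ref{sec:model}: couple all the processes $(\eta_{t})$ for varying $p$ using the same Poisson clocks $\mathscr{P}$ and the same uniforms $(U_{x})$, with $\eta_{0}(x)=\charf{\{U_{x}\le p\}}$. For continuity in $p$ at fixed $t$: the event $H(4n,n)$ at time $t$ is determined by the initial opinions on the finite set $C_{t}[R_{4n,n}]:=\bigcup_{x\in R_{4n,n}}C_{t}(x)$ together with the (finitely many, on that set) clock rings before time $t$; since $C_{t}[R_{4n,n}]$ is almost surely finite, changing $p$ to $p'$ only flips the initial opinions of those $x$ with $U_{x}$ between $p$ and $p'$, and the probability that any such $x$ lies in the relevant finite set goes to $0$ as $|p-p'|\to 0$ by dominated convergence (or just a union bound over the random but integrable number of relevant sites). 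For continuity in $t$ at fixed $p$: on the event that no clock in the finite set $C_{t}[R_{4n,n}]$ rings in the time window $(t-\delta,t+\delta)$ — an event of probability tending to $1$ as $\delta\to 0$ since there are a.s. finitely many clocks and finitely many rings — the configuration restricted to $R_{4n,n}$ is constant on $(t-\delta,t+\delta)$, hence $H(4n,n)$ has the same truth value throughout. Combining these two one-variable continuities (or running the joint argument directly: outside an event of small probability, finitely many sites and finitely many rings are involved, and perturbing $(p,t)$ by a small $\delta$ changes none of them) yields the joint continuity, and in particular $\PP_{p-\delta,t-\delta}[H(4n,n)]\to\PP_{p,t}[H(4n,n)]\ge 1-\epsilon/2$ as $\delta\to 0$, so for $\delta$ small enough the bound $1-\epsilon$ holds.

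I expect the main obstacle to be handled exactly by the localization trick already used in the proof of Proposition~\ref{prop:fkg}: one must be careful that although $H(4n,n)$ has finite \emph{spatial} support $R_{4n,n}$, determining its value requires querying the larger random set $C_{t}[R_{4n,n}]$, which is not deterministically bounded. The remedy is to first restrict to the high-probability event $C_{n'}^{c}$ (in the notation of that proof, $C_{t}(x)\subseteq B_{n'}$ for all $x\in R_{4n,n}$, for $n'$ large), estimate the error by $\useconstant{c:large_cone_of_light}(2n'+1)^{2}e^{-n'}$ via~\eqref{eq:correlation_decay}, and only then invoke continuity of the finite-volume dynamics on the box $B_{n'}$, which is a finite-state Markov chain whose time-$t$ marginal depends continuously on $p$ (polynomial in $p$, as the initial law is a product of Bernoulli's) and on $t$ (via the exponential of the generator). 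Once this quantitative bookkeeping is in place, the rest is the routine chain of inequalities described above, and the proposition follows. Note that monotonicity of $p_{c}$ in $t$ is not needed here; the decrease in $t$ is gained for free from Theorem~\ref{t:box_crossing}'s use of $\lambda=4$, which leaves a safety margin allowing us to lower $t$ slightly.
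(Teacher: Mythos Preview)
Your proposal is correct and follows essentially the same approach as the paper: use Theorem~\ref{t:box_crossing} to get the crossing probability $\PP_{p,t}[H(4n,n)]$ close to $1$ for some large $n$, invoke continuity of this probability in $(p,t)$ for that fixed $n$, and then apply Lemma~\ref{lemma:percolation_condition}. The paper's version is slightly more streamlined---it perturbs $t$ first by directly bounding $\PP_{p}[\eta_{t-\delta}(x)\neq\eta_{t}(x)\text{ for some }x\in R_{4n,n}]$, then perturbs $p$ using continuity of $p\mapsto\PP_{p,t-\delta}[H(4n,n)^{c}]$---whereas you argue joint continuity via the graphical construction and the localization device from Proposition~\ref{prop:fkg}, but the substance is the same.
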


\begin{proof}
Let $\epsilon>0$ and $n_{0} \in \NN$ be the values given by Lemma~\ref{lemma:percolation_condition} if we set $T=t$.

Use Theorem~\ref{t:box_crossing} to find $n \geq n_{0}$ such that
\begin{equation}
\PP_{p,t}[H(4n, n)^{c}]\leq \frac{\epsilon}{3}.
\end{equation}

Now, if $\delta>0$ is small enough, we have
\begin{equation}
\begin{split}
\PP_{p,t-\delta}[H(4n, n)^{c}] & \leq \PP_{p,t}[H(4n, n)^{c}]+\PP_{p}\left[\begin{array}{cl} \eta_{t-\delta}(x) \neq \eta_{t}(x), \\  \text{for some } x \in [1,4n] \times [1,n] \end{array} \right] \\
& \leq \frac{\epsilon}{3}+\frac{\epsilon}{3}=\frac{2\epsilon}{3}.
\end{split}
\end{equation}

Finally, observe that $p \mapsto \PP_{p,t-\delta}[H(4n, n)^{c}]$ is a continuous function. This implies that, for $\tilde{\delta}>0$ small, we have
\begin{equation}
\PP_{p-\tilde{\delta},t-\delta}[H(4n, n)^{c}] \leq \epsilon.
\end{equation}

An application of Lemma~\ref{lemma:percolation_condition} implies the result.
\end{proof}

\par We are now ready to conclude the proof of Theorem~\ref{t:critical_probability}.
\begin{proof}[Proof of Theorem~\ref{t:critical_probability}]

Assume that, for some $t \geq 0$, we have
\begin{equation}
\PP_{p_{c}(t),t}[\eta \text{ percolates}]>0.
\end{equation}

Proposition~\ref{prop:percolation_parameters} implies that
\begin{equation}
\PP_{p_{c}(t)-\delta,t-\delta}[\eta \text{ percolates}]>0.
\end{equation}
Moreover, since percolation is preserved by the dynamics, the same is true for the measure $\PP_{p_{c}(t)-\delta,t}$, a contradiction with the definition of $p_{c}(t)$.
\end{proof}

\section{Critical percolation as a function of $t$}\label{sec:time_perc}
~
\par We now focus on how $p_{c}(t)$ varies with $t$. We first prove Theorem~\ref{t:continuity}, that says $p_{c}(t)$ is a continuous function of $t$. Then, we present the proof of Theorem~\ref{t:strictly_decreasing_pc}.

\begin{proof}[Proof of Theorem~\ref{t:continuity}.]
Since $t \mapsto p_{c}(t)$ is non-increasing, the lateral limits
\begin{equation}
p_{c}(t+)=\lim_{s \to t^{+}} p_{c}(s) \quad \text{and} \quad p_{c}(t-)=\lim_{s \to t^{-}} p_{c}(s)
\end{equation}
exist and are finite for all $t \geq 0$. To conclude the proof, it suffices to verify that they coincide with $p_{c}(t)$.

We first check that $p_{c}(t-)=p_{c}(t)$. Assume, by contradiction, that $p_{c}(t-)>p_{c}(t)$ and choose $p \in (p_{c}(t), p_{c}(t-))$. We apply Proposition~\ref{prop:percolation_parameters} and conclude that, since
\begin{equation}
\PP_{p,t}[\eta \text{ percolates}]>0,
\end{equation}
we have
\begin{equation}
\PP_{p,t-\delta}[\eta \text{ percolates}]>0,
\end{equation}
contradicting the fact that $p < p_{c}(t-) \leq p_{c}(t-\delta)$.

We now verify that $p_{c}(t+)=p_{c}(t)$. We construct a coupling $(\xi, \bar{\xi})$ with respective marginal distributions $\PP_{p+\delta,t+\delta}$ and $\PP_{p+\delta+\delta',t}$, with $\delta'=(1-p-\delta)(1-e^{-\delta})>0$, such that $ \xi \leq \bar{\xi}$.

Let $\mathscr{P}=(\mathscr{P}_{x})_{x \in \ZZ^{2}}$ be a collection of independent Poisson clocks with rate one and $\xi_{0}$ be an initial configuration with density $p+\delta$. The configuration $\xi$ is obtained by running the graphical construction presented in Section~\ref{sec:model} with $\xi_{0}$ and $\mathscr{P}$ up to time $t+\delta$. As for $\bar{\xi}$, we begin with $\xi_{0}$, and between times zero and $\delta$, when a clock $\mathscr{P}_{x}$ rings, we set the entry of $x$ equal to 1. From time $\delta$ up to $t+\delta$, we simply perform majority dynamics. It is easy to verify that the coupling satisfies all the required properties.

Fix $p=p_{c}(t+)$ and use the coupling above. For $\delta>0$, we have $p+\delta > p_{c}(t+\delta)$, and hence
\begin{equation}
\begin{split}
\PP_{p+\delta+\delta',t}[\eta \text{ percolates}] & = \PP_{coupling}[\bar{\xi} \text{ percolates}] \\ 
& \geq \PP_{coupling}[\xi \text{ percolates}] \\
& = \PP_{p+\delta,t+\delta}[\eta \text{ percolates}]>0.
\end{split}
\end{equation}
This implies 
\begin{equation}
p_{c}(t) \leq p_{c}(t+)+\delta+\delta', \quad \text{for all } \delta>0.
\end{equation}
Taking the limit $\delta \to 0$ yields the result.
\end{proof}

\par The remaining of the section is devoted to the proof of Theorem~\ref{t:strictly_decreasing_pc}.

\begin{proof}[Proof of Theorem~\ref{t:strictly_decreasing_pc}.]
We use Aizemann-Grimmett Theorem (Theorem 1 of~\cite{ag} and Theorem 2 of~\cite{bbr}) to obtain a strict inequality. For every $t >0$, we will define an essential enhancement.

First, we partition the sites of $\ZZ^{2}$ in two sets
\begin{equation}\label{eq:even_sites}
A=\left\{(x,y) \in \ZZ^{2}: x+y \text{ is even} \right\} \quad \text{and} \quad B=\ZZ^{2} \setminus A.
\end{equation}

We will define an enhancement only for the sites in $A$ (see Remark~\ref{remark:enhancement} as to why the enhancement theorem applies in this case). According to the graphical construction in Section~\ref{sec:model}, each site of $\ZZ^{2}$ has a Poisson clock associated to it. The distribution of the first time a clock rings in a given site $x \in \ZZ^{2}$ is a random variable $T_{x} \sim \expo(1)$. For every site $x \in B$, we write $T_{x}$ as the sum of four i.i.d. random variables (this is possible since the exponential distribution is infinitely divisible) and associate each one of these variables to a neighboring site of $x$. Each site $y \in A$ will have four random variables associated to it, one from each of its neighboring sites in $B$. These random variables together with $T_{y}$ will determine whether the enhancement at $y$ is activated or not: If $T_{y}<t$ and all the four random variables associated to $y$ are larger than $t$ the enhancement is activated at $y$. When the enhancement is performed at $y$, we change the configuration at $y$ to a $1$ if in the original configuration at least three of its neighbors are a $1$.

Aizemann-Grimmett Theorem implies that
\begin{equation}
p_{c}^{enh} < p_{c}^{site}.
\end{equation}
Hence, it is enough to verify that
\begin{equation}\label{eq:pt_penh}
p_{c}(t) \leq p_{c}^{enh}.
\end{equation}

Fix $p \in (p_{c}^{enh}, p_{c}^{site})$ and let $\{x_{0}, x_{1}, \dots \}$ be an infinite open path for the enhanced configuration. Let us prove that, for some $j \geq 0$, $\{x_{j}, x_{j+1}, \dots \}$ is an open collection of sites for $\eta_{t}$.

We may partition the infinite open path in chains $\{x_{i}, x_{i+1}, \dots, x_{i+k}\}$, $k \geq 0$, in which no site was enhanced and such that both $x_{i-1}$ and $x_{i+k+1}$ have their enhancements performed. We choose $j$ as the first positive index whose enhancement is performed. Each of the chains $\{x_{i}, x_{i+1}, \dots, x_{i+k}\}$ cannot be destroyed before time $t$, since the first and last sites of the chain have a first clock ring only after time $t$ and each site in the middle cannot change opinion. As for the enhanced sites, they have a clock ring before time $t$, all the neighboring sites have the first clock ring after time $t$ and they can change into a one. This implies that every enhanced site is open at time $t$. In particular, $p_{c}(t) \leq p$ and hence~\eqref{eq:pt_penh}, concluding the proof.
\end{proof}

\begin{remark}\label{remark:enhancement}
The enhancement used here does not fit the exact hypotheses from Theorem 2 of~\cite{bbr}. The only difference is that, while in~\cite{bbr} the enhancement is performed on all sites, here we only use sites in $A$ (see~\eqref{eq:even_sites}). It is necessary to verify that the proof given in~\cite{bbr} still holds in this case. The central argument relies on the fact that one can locally modify the configuration to pass from $p$-pivotal to enhancement-pivotal sites. In our case, this argument works for sites in $A$. When considering a $p$-pivotal site in $B$, using the same argument with minor modifications, one can verify that it is possible to locally modify the configuration so that one of the neighbors of the $p$-pivotal site turns into an enhancement-pivotal site. With this in hands, the proof of the enhancement theorem follows the same steps of~\cite{bbr}.
\end{remark}

\bibliographystyle{plain}
\bibliography{mybib}

\end{document}